\documentclass[12pt, reqno]{amsart}
\usepackage{latexsym}
\usepackage{mathtools}
\usepackage{amsmath}
\usepackage{amsthm}
\usepackage{mathrsfs}
\usepackage{lmodern}
\usepackage[T1]{fontenc}
\usepackage{textcomp}
\usepackage[utf8]{inputenc}
\usepackage{color}
\usepackage{amssymb}
\usepackage{enumerate}
\usepackage[abbrev]{amsrefs}
\usepackage{indentfirst}
\usepackage{graphicx}
\usepackage{bmpsize}
\usepackage{hyperref}
\usepackage{enumitem}
\usepackage{url}
\usepackage{autobreak}
\usepackage[justification=justified]{caption}
\usepackage{bm}
\usepackage{here}
\usepackage{cmap}
\input glyphtounicode.tex        
\usepackage[labelformat=empty,subrefformat=parens]{subcaption}
\usepackage[pagewise]{lineno}
\newcommand*\patchAmsMathEnvironmentForLineno[1]{
  \expandafter\let\csname old#1\expandafter\endcsname\csname #1\endcsname
  \expandafter\let\csname oldend#1\expandafter\endcsname\csname end#1\endcsname
  \renewenvironment{#1}
     {\linenomath\csname old#1\endcsname}
     {\csname oldend#1\endcsname\endlinenomath}}
\newcommand*\patchBothAmsMathEnvironmentsForLineno[1]{
  \patchAmsMathEnvironmentForLineno{#1}
  \patchAmsMathEnvironmentForLineno{#1*}}
\AtBeginDocument{
\patchBothAmsMathEnvironmentsForLineno{equation}
\patchBothAmsMathEnvironmentsForLineno{align}
\patchBothAmsMathEnvironmentsForLineno{flalign}
\patchBothAmsMathEnvironmentsForLineno{alignat}
\patchBothAmsMathEnvironmentsForLineno{gather}
\patchBothAmsMathEnvironmentsForLineno{multline}
}
\renewcommand{\thefootnote}{} %footnote counter

\theoremstyle{plain} %text of this environment is typesetted in italics
\newtheorem{theorem}{\indent\sc Theorem}[section]
\newtheorem{lemma}[theorem]{\indent\sc Lemma}
\newtheorem{corollary}[theorem]{\indent\sc Corollary}
\newtheorem{proposition}[theorem]{\indent\sc Proposition}

\theoremstyle{definition} %text of this environment is typesetted in roman letters
\newtheorem{definition}[theorem]{\indent\sc Definition}
\newtheorem{remark}[theorem]{\indent\sc Remark}

\makeatletter
  
  \@addtoreset{equation}{section}
\makeatother
\newcommand{\cT}{\mathcal{T}}

\newcommand{\cC}{\mathcal{C}}
\newcommand{\bR}{\mathbb{R}}

\newcommand{\bZ}{\mathbb{Z}}
\newcommand{\bQ}{\mathbb{Q}}

\newcommand{\Ca}{\mathfrak{C}}
\newcommand{\PPSL}{\mathrm{PPSL}_2(\bR)}
\newcommand{\relmiddle}[1]{\mathrel{}\middle#1\mathrel{}}
\allowdisplaybreaks[3] %改ページを数式に許す

\newcommand\cinput[2]{\lower#1pt\hbox{\input{#2}}}
\subjclass[2020]{20F65}

\begin{document}
\keywords{distortion, Baumslag--Solitar group, Lodha--Moore group, Thompson's group}

\title{Distorted and undistorted subgroups of the Lodha--Moore group}
\author{Yuya Kodama}
\date{}
\renewcommand{\thefootnote}{\arabic{footnote}}  %number
\setcounter{footnote}{0} %footnote counter
\begin{abstract}
  We show that the Baumslag--Solitar group $BS(1,2)$ is undistorted in the Lodha--Moore group $G_0$ using an explicit lower bound for the word length of $G_0$.
  We also show that Thompson's group $F$ is distorted in $G_0$.
\end{abstract}

\maketitle

\section{Introduction}
%%%%%%%%%%%%%%%%%%%%%%%%%%%%%%%%%%%%%%%%%
Thompson's groups $F$, $T$, and $V$ were defined by Richard Thompson in the 1960s. 
They admit concrete descriptions by pairs of binary trees, by homeomorphisms on the Cantor set, and so on, which make detailed studies of these groups possible.
These groups have several remarkable properties:
they are finitely presented, $T$ and $V$ are infinite simple groups, and the amenability of $F$ remains a major open problem.
To understand their mysterious properties, many groups closely related to Thompson's groups have been introduced and studied.
For background on Thompson's groups, see \cite{cannon1996introductory}, for example.

The Lodha--Moore group $G_0$ was introduced by Lodha and Moore in \cite{lodha2016nonamenable}.
It is finitely presented, torsion-free, contains no non-abelian free subgroups, and is of type $F_\infty$ \cite{lodha2016nonamenable,lodha2020nonamenable}.
The group resembles Thompson's group $F$ in many ways, as it can be obtained from $F$ by adding one generator, but it is known to be non-amenable.
Therefore, it is natural to further investigate the relationship between $G_0$ and $F$.

However, little is known about the geometric properties of $G_{0}$.
Some geometric results are known \cite{lodha2020nonamenable, zaremsky2016hnn, lucy2024divergence, kodama2025lodha}, but compared with $F$, our current understanding is still quite limited.
For this reason, in this paper we study distortion as one of the fundamental geometric properties of $G_0$.

Given an inclusion $H \hookrightarrow G$, distortion compares the intrinsic word metric on $H$ with the metric induced from $G$.
More precisely, if the inclusion is a quasi-isometric embedding (resp.~is not a quasi-isometric embedding), then the group $H$ is called \textit{undistorted} (resp.~\textit{distorted}).
An undistorted embedding preserves the large-scale geometry of $H$, while distortion indicates that the ambient group compresses certain geometry of $H$. 

Thompson's groups, and groups similar to them, often admit several concrete realizations of elements, which makes it possible to estimate word length efficiently.
However, in the case of $G_0$, except for the bound obtained in \cite[Proposition 3.2]{lucy2024divergence}, almost no such bounds were previously known to the best of our knowledge. 
In this paper we construct more effective bounds for our purposes.
More precisely, using the realization of $G_0$ as a group of homeomorphisms on $\bR$, we define two lower bounds for the word length of $G_0$, and apply them to show that the Baumslag--Solitar group $BS(1,2)$ is undistorted (Theorem \ref{theorem_BS_distorted}).
In contrast, by examining a specific sequence of elements in $F$, we prove that Thompson's group $F$ is distorted in $G_0$ (Theorem \ref{theorem_F_distorted}).

If a Baumslag--Solitar group appears as a subgroup of a group, then its geometric properties are quite restricted.
For example, a group containing $BS(m,n)$ with $|m| \neq |n|$ cannot be semihyperbolic \cite[Proposition 7.17]{alonso1995semihyperbolic}.
From this point of view, the fact that $BS(1,2)$ is undistorted in $G_{0}$ may help clarify how $G_{0}$ differs from Thompson's group $F$.
Note that Thompson's group $V$, and therefore $F$, does not contain any Baumslag--Solitar subgroup with $|m| \neq |n|$ \cite{rover1999subgroups,zbMATH06659031,zbMATH06256444}.

Several related results on distortion have been obtained for Thompson's groups and for groups similar to them.
First, $F$ is undistorted in $T$, and $F, T$ are undistorted in $V$ since the word lengths of $F$ and $T$ are equivalent to the number of carets \cite{burillo2001metrics,zbMATH05518616,zbMATH02147011}.
Cyclic subgroups of $V$ are known to be undistorted \cite{zbMATH02147011,zbMATH06256444}.
Inside $F$ itself, Burillo \cite{zbMATH01271644} and Guba and Sapir \cite{guba1997diagram,zbMATH01406862} showed that subgroups isomorphic to $F^m \times \bZ^n$ can be quasi-isometrically embedded into $F$.
Cleary and Taback \cite{zbMATH01985412} investigated several quasi-isometrically embedded subgroups, and the subgroups $\bZ \wr \bZ$ and $F \wr \bZ$ are undistorted \cite{zbMATH05190386,zbMATH06407221}. Also, all finitely generated maximal subgroups are undistorted \cite{zbMATH08069398,zbMATH08098344}.
In a more general setting, it was shown that arbitrary finitely generated abelian subgroups of any diagram group are undistorted \cite{zbMATH01406862}.

In higher-dimensional Thompson's groups $2V$, Burillo and Cleary \cite{MR2734164} showed that $F$, $T$, and $V$ are at least exponentially distorted in $2V$.
In the Thompson--Stein groups $F(n_1,\dots,n_k)$, Wladis \cite{zbMATH05879448} showed that the natural inclusions $F(n_i)\hookrightarrow F(n_1,\dots,n_k)$ are at least exponentially distorted, while cyclic subgroups remain undistorted \cite{zbMATH05883467}.
These facts suggest that higher-dimensional Thompson's groups and the Stein--Thompson groups have geometries that differ from those of other Thompson's groups, and one may view the Lodha--Moore group as belonging to the same family.

This paper is organized as follows:
in Section \ref{section_preliminary}, we review the definitions of Thompson's group $F$ and the Lodha--Moore group $G_0$, as subgroups of the homeomorphism groups of $\bR$ and the Cantor set.
We also define an invariant which gives a lower bound for the word length.
In Section \ref{section_BS_undistorted}, we show that the logarithm of the invariant gives a lower bound for the word length of $G_0$, and apply it to show that $BS(1,2)$ is undistorted.
In Section \ref{section_F_distorted}, we construct a sequence of elements in $F < G_0$ which shows that $F$ is distorted in $G_0$.
%%%%%%%%%%%%%%%%%%%%%%%%%%%%%%%%%%%%%%%%%
\section{Preliminaries} \label{section_preliminary}
%%%%%%%%%%%%%%%%%%%%%%%%%%%%%%%%%%%%%%%%%
In this paper, if $f$ and $g$ are homeomorphisms on a topological space to itself, then $fg \coloneqq g \circ f$.
For an element $g$ of a finitely generated group $G$ with a finite generating set $S$, its word length with respect to $S$ is denoted by ${\|g\|}_S$.

\subsection{Thompson's group $F$}
We begin by recalling the definition of Thompson's group $F$.
For more details, see \cite{cannon1996introductory}, for example.
Let $\cT$ be the set consisting of pairs of rooted binary trees with the same number of leaves.
We define an equivalence relation on $\cT$ as follows: let $(T_1, T_2) \in \cT$ be a pair of binary trees with $n$ leaves.
We label the leaves of $T_1$ and $T_2$ with $1, \dots, n$ from left to right.
A binary tree consisting of the root and two leaves is called a \textit{caret}.
By attaching a caret to the $i$-th leaf of each $T_1$ and $T_2$ for some $i$, we obtain a pair of binary trees in $(T_1^\prime, T_2^\prime) \in \cT$.
We define $(T_1, T_2)$ to be equivalent to $(T_1^\prime, T_2^\prime)$, and let $\sim$ be the equivalence relation generated by these expansions.
\textit{Thompson's group $F$} is the set $\cT/{\sim}$ with the following operation:
let $a=(A_1, A_2)$ and $b=(B_1, B_2)$ be representatives of elements in $\cT/{\sim}$.
Then there exist $(A_1^\prime, A_2^\prime) \sim (A_1, A_2)$ and $(B_1^\prime, B_2^\prime) \sim (B_1, B_2)$ such that $A_2^\prime=B_1^\prime$.
By using these new pairs, we define $ab$ to be the equivalence class of $(A_1^\prime, B_2^\prime)$.
One verifies that this operation is independent of the choice of representative.
It is known that for any equivalence class of $F$, there exists a unique representative whose number of carets is minimal \cite{cannon1996introductory}.
For $x \in F$, we call such a representative $(T_1, T_2)$ \textit{reduced pair of $x$}, and write the number of carets of $T_1$ (or $T_2$) as $N(x)$.
It is also known that $F$ has the following group presentation:
\begin{align*}
  \langle x_0, x_1 \mid [x_0x_1^{-1}, x_0^{-1}x_1x_0], [x_0x_1^{-1}, x_0^{-2}x_1x_0^2]\rangle,
\end{align*}
where the generators $x_0$ and $x_1$ are illustrated in Figure \ref{figure_generators_F}.
\begin{figure}[tbp]
  \begin{center}
    \includegraphics[width=0.6\linewidth]{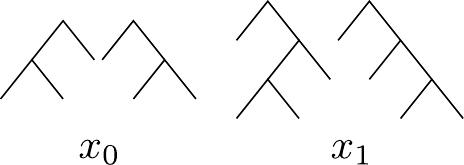}
  \end{center}
  \caption{Two generators of $F$. }
  \label{figure_generators_F}
\end{figure}
We denote the generating set $\{x_0, x_1\}$ of $F$ by $S_F$.

The following theorem is important for estimating the word length of $F$.
\begin{theorem}[{{\cite[Theorems 1, 3]{burillo2001metrics}}}] \label{theorem_Burillo_metric_Fp}
  There exists a constant $C \geq 1$ such that for any element $x \in F$,
  \begin{align*}
    \frac{1}{C} N(x) \leq \|x\|_{S_F} \leq CN(x)
  \end{align*}
  holds.
\end{theorem}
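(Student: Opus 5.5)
Both inequalities are proved by comparing the word length with the number of carets $N(x)$ of the reduced pair. The upper bound is a counting argument on tree pairs; the lower bound needs the minimality of reduced pairs.

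\textbf{Lower bound.} We show that right multiplication by a generator $x_0^{\pm1}$ or $x_1^{\pm1}$ changes $N$ by at most a bounded amount, say $2$.
\begin{itemize}
\item Let $(T_1,T_2)$ be the reduced pair of $x$. The generator $x_i^{\pm1}$ has a representative with at most $3$ carets.
\item To multiply, we expand $T_2$ and the source tree of the generator to a common tree. The target tree of the generator involves only the first few levels near the root or its right spine. So this expansion adds at most two carets to $T_2$.
\item The product therefore has a representative with at most $N(x)+2$ carets. Since the reduced representative is minimal, $N(xs)\le N(x)+2$.
\item Applying the same bound to $xs$ and $s^{-1}$ gives $N(x)\le N(xs)+2$.
\end{itemize}
By induction on word length, $N(x)\le 2\|x\|_{S_F}+1$, since the identity has $N=0$ or $1$ depending on convention. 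This gives $\frac1C N(x)\le\|x\|_{S_F}$.

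\textbf{Upper bound.} Here we need a word of length $O(N(x))$ representing $x$.
\begin{itemize}
\item Write $x=(T_1,T_2)=(T_1,R)(R,T_2)$, where $R$ is the right vine with $N(x)$ carets.
\item Each factor $(T,R)$, which is positive in the normal form, equals a product $x_{i_1}x_{i_2}\cdots$ of standard generators $x_n=x_0^{-(n-1)}x_1x_0^{n-1}$. Naively this only gives quadratic length.
\item To obtain linear length, we rewrite the positive word as a path that walks down the tree. We process the carets of $T$ in a depth-first order, alternating conjugation by $x_0$ with insertion of $x_1$. Each caret then contributes $O(1)$ letters, with extra $x_0^{\pm1}$ letters accounting for moves between left and right subtrees.
\item Concretely, we can define recursively a word $w(T)$ for the subtree rooted at a left child and one for a right child. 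We verify the identity $(T,R)=w(T)$ by induction on the number of carets, checking that $|w(T)|\le c\,(\text{carets of }T)$.
\end{itemize}

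\textbf{Main obstacle.} The hardest step is this linear upper bound. Carets in the left subtree of $T_1$ correspond to generators $x_n$ with large index, and bookkeeping is needed so that the $x_0$-conjugations telescope instead of accumulating. I would first try the induction on the decomposition of the tree into its root's left and right subtrees. The relation $x_0^{-1}x_nx_0=x_{n+1}$ lets us shift all indices in the right subtree at cost $O(1)$. The left subtree is handled by a separate, similarly bounded word.

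Combining the two bounds with $C$ the maximum of the constants gives the theorem.
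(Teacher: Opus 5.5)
The paper does not prove this statement; it quotes it from \cite{burillo2001metrics}, so there is no internal argument to compare against. Your lower bound is correct and is the standard argument. The source tree of $s\in\{x_0^{\pm1},x_1^{\pm1}\}$ has at most three carets, one of which is the root caret already in $T_2$ when $N(x)\ge 1$. So the common expansion adds at most two carets, which gives $N(xs)\le N(x)+2$ and $N(x)\le 2\|x\|_{S_F}+1$.

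\textbf{The gap is the upper bound.} Your reduction to factors $(T,R)$ is fine, but you never produce a word of length $O(n)$ for such a factor: $w(T)$ is never defined and the induction is never carried out. Even granting the right-subtree step, your sentence that the left subtree ``is handled by a separate, similarly bounded word'' is exactly what needs proof. It is also where a recursion on subtrees runs into trouble. Placing a word for $L$ into the left half of the tree is not a conjugation. Substituting the images of $x_0,x_1$ under the left-shift endomorphism of $F$ (for instance $x_0\mapsto x_0^2x_1^{-1}x_0^{-1}$) multiplies length by a fixed factor greater than $1$ at each step, and this compounds exponentially along a left-leaning tree.

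The missing idea is that no recursion is needed. Suppose $T$ has $n$ carets and leaf exponents $e_0,\dots,e_n$. Then $(T,R)=x_0^{e_0}x_1^{e_1}\cdots x_n^{e_n}$ with $\sum_k e_k\le n$, since each caret contributes to at most one leaf exponent. Substitute $x_k=x_0^{-(k-1)}x_1x_0^{k-1}$; the intermediate powers of $x_0$ cancel, giving
\[
(T,R)=x_0^{e_0}\,x_1^{e_1}\,x_0^{-1}x_1^{e_2}\,x_0^{-1}x_1^{e_3}\cdots x_0^{-1}x_1^{e_n}\,x_0^{\,n-1}.
\]
This is a word of length at most $\sum_k e_k+2(n-1)<3n$. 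It is exactly the telescoping you named as the main obstacle. Combine it with your factorization $x=(T_1,R)(R,T_2)$ and the identity $(R,T_2)=(T_2,R)^{-1}$ to get $\|x\|_{S_F}\le 6N(x)$, which completes the proof.
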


\subsection{The Lodha--Moore group $G_0$ as homeomorphisms on $\bR$}
In this section, we recall the definition of the Lodha--Moore group $G_0$.
The group admits two different but equivalent descriptions.
In this paper, we first introduce $G_0$ as a subgroup of $\PPSL$, the group of orientation-preserving piecewise projective homeomorphisms on $\bR \cup \{\infty\}$.
Precisely, for each element $f$ of $\PPSL$, there exist finitely many elements $s_1, \dots, s_n \in \bR$ such that $f\vert_{(-\infty, s_1]}, f\vert_{[s_1, s_2]}, \dots f\vert_{[s_{n-1}, s_n]}, f\vert_{[s_n, \infty)}$ are of the form $t \mapsto (a_1 t + b_1)/(c_1 t + d_1), t \mapsto (a_2 t + b_2)/(c_2 t + d_2), \dots, t \mapsto (a_n t + b_n)/(c_n t + d_n)$, respectively, where $a_i, b_i, c_i, d_i \in \bR$ with $a_id_i-b_i c_i=1$.
For $f \in \PPSL$, an element $t \in \bR$ is called a \textit{regular point} if there exists a neighborhood $U \subset \bR$ of $t$ such that $f\vert_U$ is of the form $(at+b)/(ct+d)$.
We denote $B(f) \coloneqq \bR \setminus \{\text{regular points of $f$}\}$ and call each element \textit{breakpoint}.
The connected components of $\bR \setminus B(f)$ are called \textit{linear fractional components}.
\begin{definition}[\cite{lodha2016nonamenable}]
  The \textit{Lodha--Moore group} $G_0$ is a subgroup of $\PPSL$ generated by the following maps:
  \begin{align*}
    a(t) & =t+1,
         &
    b(t) & = \left \{
    \begin{array}{cc}
      t              & \mbox{\rm{if} $t \leq 0$}                  \\
      \frac{t}{1-t}  & \mbox{\rm{if} $0 \leq t \leq \frac{1}{2}$} \\
      \frac{3t-1}{t} & \mbox{\rm{if} $\frac{1}{2} \leq t \leq 1$} \\
      t+1            & \mbox{\rm{if} $1 \leq t$},
    \end{array}
    \right.
         &
    c(t) & = \left \{
    \begin{array}{cc}
      \frac{2t}{1+t} & \mbox{\rm{if} $0 \leq t \leq 1$} \\
      t              & \mbox{\rm{otherwise}}.
    \end{array}
    \right.
  \end{align*}
\end{definition}
\begin{remark}
  The inverses of the above maps are the following:
  \begin{align*}
    a^{-1}(t) & =t-1,
              &
    b^{-1}(t) & = \left \{
    \begin{array}{cc}
      t             & \mbox{\rm{if} $t \leq 0$}        \\
      \frac{t}{t+1} & \mbox{\rm{if} $0 \leq t \leq 1$} \\
      \frac{1}{3-t} & \mbox{\rm{if} $1 \leq t \leq 2$} \\
      t-1           & \mbox{\rm{if} $2 \leq t$},
    \end{array}
    \right.
              &
    c^{-1}(t) & = \left \{
    \begin{array}{cc}
      \frac{t}{2-t} & \mbox{\rm{if} $0 \leq t \leq 1$} \\
      t             & \mbox{\rm{otherwise}}.
    \end{array}
    \right.
  \end{align*}
\end{remark}
On each linear fractional component, a map $t \mapsto (at+b)/(ct+d)$ is naturally identified with an element of the projective linear group $\mathrm{PGL}_2(\bR)$.

In the case of $G_0$, from the definition of the generators, on each linear fractional component, there exists a unique representative $A=\begin{pmatrix} a&b \\ c&d\end{pmatrix} \in \mathrm{GL}_2(\bZ)$ such that
\begin{enumerate}
  \item $\gcd(|a|,|b|,|c|,|d|)=1$;
  \item either $a>0$, or $a=0$ and $b>0$.
\end{enumerate}
By using this representative, we estimate the word length of elements of $G_0$ with respect to $\{a, b, c\}$.
\begin{definition} \label{definition_complexity}
  Let $f \in G_0$.
  \begin{enumerate}
    \item We define
          \begin{align*}
            D(f) \coloneqq \max \left\{q \in \bZ \relmiddle{|} \frac{p}{q} \in B(f), p \in \bZ, \gcd(|p|, q)=1, q>0\right\}.
          \end{align*}
          If $B(f)=\emptyset$, then we define $D(f) \coloneqq 1$;
    \item Let $I_1, \dots, I_n$ be all the linear fractional components of $f$. Then we define
          \begin{align*}
            M(f) \coloneqq \max_{1 \leq i \leq n} \left \{ \max\{|a_i|, |b_i|, |c_i|, |d_i|\} \relmiddle| \text{$\begin{pmatrix} a_i&b_i \\ c_i & d_i \end{pmatrix} \in \mathrm{GL}_2(\bZ)$ corresponds to $f\vert_{I_i}$}\right \}.
          \end{align*}
          In other words, $M(f)$ is the maximum absolute value of entries of matrices corresponding to $f\vert_{I_1}, \dots, f\vert_{I_n}$;
    \item We define $\cC(f) \coloneqq \max\{D(f), M(f)\}$.
  \end{enumerate}
\end{definition}
\begin{remark}
  By an unpublished result of Thurston, the subgroup $\langle a, b \rangle$ is known to be isomorphic to $F$.
\end{remark}
%%%%%%%%%%%%%%%%%%%%%%%%%%%%%%%%%%%%%%%%%
\subsection{The Lodha--Moore group as homeomorphisms on the Cantor set}
Let $\Ca$ be the Cantor set $\{0, 1\} \times \{0, 1\} \times \cdots$.
For a finite binary word $b_1$ and a finite (or infinite) binary word $b_2$, their concatenation is denoted by $b_1 b_2$.

In this section, we see $G_0$ as a subgroup of the homeomorphism group of $\Ca$.
To define generators of $G_0$, we first introduce the following homeomorphism.
\begin{definition}
  The map $y$ and its inverse $y^{-1}$ are defined recursively based on the following rule:
  \begin{align*}
     & y\colon \Ca \to \Ca        &  & y^{-1}\colon \Ca \to \Ca        \\
     & y(00\zeta)=0y(\zeta)       &  & y^{-1}(0\zeta)=00y^{-1}(\zeta)  \\
     & y(01\zeta)=10y^{-1}(\zeta) &  & y^{-1}(10\zeta)=01y(\zeta)      \\
     & y(1\zeta)=11y(\zeta),      &  & y^{-1}(11\zeta)=1y^{-1}(\zeta).
  \end{align*}
\end{definition}
For each finite binary word $s$, we also define the map $y_s$ by setting
\begin{align*}
  y_s(\xi) & =
  \left \{
  \begin{array}{cc}
    s y(\eta) & \xi=s\eta         \\
    \xi       & \mbox{otherwise}.
  \end{array}
  \right.
\end{align*}
Now we define two of the generators of $G_0$ as follows:
\begin{align*}
  x_0 & \colon \Ca \to\Ca;
  \begin{cases}
    00\eta \mapsto 0\eta  \\
    01\eta \mapsto 10\eta \\
    1\eta \mapsto 11 \eta,
  \end{cases}
      &
  x_1 & \colon \Ca \to\Ca;
  \begin{cases}
    0\eta \mapsto 0\eta     \\
    100\eta \mapsto 10\eta  \\
    101\eta \mapsto 110\eta \\
    11\eta \mapsto 111\eta.
  \end{cases}
\end{align*}
This convention is justified by the well-known identification between equivalence classes of elements in $F$ and homeomorphisms on $\Ca$.
See \cite{lodha2016nonamenable} for details.
In fact, the maps $a$ and $b$ can also be identified with $x_0$ and $x_1$.
\begin{proposition}[{\cite[Proposition 3.1]{lodha2016nonamenable}}] \label{proposition_G_0_piecewise_Cantor}
  The group $G_0$ is isomorphic to the group generated by $x_0, x_1$, and $y_{10}$.
  The isomorphism map is given by $a \mapsto x_0$, $b \mapsto x_1$, and $c \mapsto y_{10}$.
\end{proposition}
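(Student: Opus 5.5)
The plan is to produce an explicit continuous surjection $\Phi\colon \Ca \to \bR\cup\{\infty\}$ that semi-conjugates each of $x_0, x_1, y_{10}$ to $a, b, c$ respectively. Such a semi-conjugacy induces a homomorphism $\langle x_0,x_1,y_{10}\rangle \to \PPSL$ sending the generators to $a,b,c$. We then check that this homomorphism is injective; its image is $G_0$ by definition.

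\textbf{Step 1 (the Stern--Brocot coding).} Define $\varphi\colon \Ca \to [0,\infty]$ recursively by
\begin{align*}
  \varphi(0\xi)=\frac{\varphi(\xi)}{1+\varphi(\xi)}, \qquad \varphi(1\xi)=1+\varphi(\xi).
\end{align*}
Concretely, $\varphi(\xi)$ is the limit of the nested intervals obtained by applying the composites of $t\mapsto t/(1+t)$ and $t\mapsto 1+t$ read off the prefixes of $\xi$ to $[0,\infty]$. This is the continued fraction (Minkowski-type) coding, and it is a continuous surjection. It is injective except that two tails $s01^\infty$ and $s10^\infty$ have the same image, a rational number. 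Then set
\begin{align*}
  \Phi(1\xi)=\varphi(\xi), \qquad \Phi(0\xi)=-\frac{1}{\varphi(\xi)}.
\end{align*}
This is again a continuous surjection onto $\bR\cup\{\infty\}$ that identifies only countably many pairs of points.

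\textbf{Step 2 (semi-conjugacy on generators).} We check $\Phi\circ x_0=a\circ\Phi$ directly on the three cylinders:
\begin{itemize}
  \item $\Phi(11\eta)=1+\varphi(\eta)=\Phi(1\eta)+1$;
  \item $\Phi(10\eta)=\frac{\varphi}{1+\varphi}=-\frac{1}{1+\varphi}+1=\Phi(01\eta)+1$;
  \item $\Phi(0\eta)=-\frac1{\varphi}=\Phi(00\eta)+1$.
\end{itemize}
The same cylinder-by-cylinder computation handles $x_1$ versus $b$. On $1\Ca$, the map $x_1$ acts as $x_0$ acts on the next letters, and the pieces $t/(1-t)$, $(3t-1)/t$ and $t+1$ of $b$ arise exactly as the conjugates of the three pieces of $a$.

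For $y_{10}$, the key claim is
\begin{align*}
  \varphi(y(\zeta))=2\varphi(\zeta) \quad\text{and}\quad \varphi(y^{-1}(\zeta))=\tfrac12\varphi(\zeta).
\end{align*}
Granting this, $\Phi(10\eta)=\varphi(\eta)/(1+\varphi(\eta))$ conjugates $t\mapsto 2t$ on $[0,\infty]$ to $s\mapsto 2s/(1+s)$ on $[0,1]$, which is $c$. Both maps also fix everything outside $[0,1]$, respectively outside the cylinder $10\Ca$.

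\textbf{Step 3 (the main obstacle).} The claim $\varphi\circ y=2\varphi$ is the heart of the matter, because $y$ is defined by a mutual recursion with $y^{-1}$. I would prove both identities simultaneously by showing that $\psi_1=\varphi\circ y$ and $\psi_2=\varphi\circ y^{-1}$ satisfy the same recursion as $2\varphi$ and $\tfrac12\varphi$. For example, with $u=\varphi(\zeta)$,
\begin{align*}
  \varphi(00\zeta)=\frac{u}{1+2u} \quad\text{and}\quad \varphi(0y\zeta)=\frac{2u}{1+2u}
\end{align*}
whenever $\varphi(y\zeta)=2u$. The remaining cases $01\zeta$ and $1\zeta$, and the three cases for $y^{-1}$, are similar identities of Möbius maps. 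Uniqueness then follows because each recursion step contracts the nested intervals: after $n$ letters the possible values lie in a Farey interval whose length tends to $0$, uniformly away from the endpoint $\infty$. So two functions obeying the same recursion agree on the dense set of sequences not ending in $1^\infty$, and hence everywhere by continuity.

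\textbf{Step 4 (injectivity).} Semi-conjugacy gives a homomorphism $\rho$ with $\Phi\circ g=\rho(g)\circ\Phi$, sending $x_0\mapsto a$, $x_1\mapsto b$ and $y_{10}\mapsto c$. It is surjective onto $G_0$. For injectivity, suppose $\rho(g)=\mathrm{id}$. Then $g$ preserves every fiber of $\Phi$, and all fibers are singletons outside a countable set. Hence $g$ fixes a dense subset of $\Ca$, so $g=\mathrm{id}$ by continuity.
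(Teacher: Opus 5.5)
The paper gives no proof of its own and simply cites Lodha--Moore. Your argument is correct and is essentially the standard proof behind that citation: you semi-conjugate by the continued-fraction coding $\Phi$, check $x_0, x_1, y_{10}$ against $a, b, c$ via $\varphi\circ y = 2\varphi$, and get injectivity from the density of points with singleton fibres. The one ingredient you use without stating it is that $y^{\pm 1}$, and hence every element of $\langle x_0, x_1, y_{10}\rangle$, is continuous. This is immediate because each recursion step reads at most two letters and writes at least one, and you need it both for the ``density plus continuity'' step in Step 3 and for injectivity in Step 4.
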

In particular, via this identification, we regard $F$ as a subgroup of $G_0$.

Here, we quickly recall the tree diagrams of $G_0$.
Because we only use this representation for $F < G_0$ in Section \ref{section_F_distorted}, see \cite{lodha2016nonamenable} for the precise definition.
We first note that for a given binary tree, each vertex is identified with a finite binary word in the following way:
we label all the left edges of carets with $0$ and the right one with $1$.
Then each vertex corresponds to a finite path on the tree from the root to the vertex, and the sequence of edge labels representing this path gives a finite binary word.

For a given finite binary word $s$, we take a binary tree $T_s$ such that $T_s$ is minimal and contains the path $s$ (from the root).
Then $y_s$ is represented by $(T_s, T_s^{\prime})$ where $T_s^\prime$ is the tree $T_s$ with the vertex $s$ labelled by a black dot.
Similar to $y_s$, the map $y_s^{-1}$ (resp.~$y_s^k$ with $|k|>1$) is represented by the pair consisting of two copies of $T_s$ where the vertex $s$ of the latter tree is labelled by a white dot (resp.~$k$).

In \cite[Remark 2.9]{zaremsky2016hnn}, it is claimed that $y_0^{-1}y_1$ and $x_0$ generate a copy of Baumslag--Solitar group $BS(2, 1)$.
Since this is not a subgroup of $G_0$ (but is a subgroup of the larger group ${}_yG_y=G$), we instead consider the following elements:
\begin{align*}
  g_1(t) & \coloneqq (bca^{-1}c^{-1}ab^{-1})(t)
  = \left \{
  \begin{array}{cc}
    t               & \mbox{\rm{if} $t \leq 0$}                  \\
    \frac{2t}{2t+1} & \mbox{\rm{if} $0 \leq t \leq \frac{1}{2}$} \\
    \frac{1}{3-2t}  & \mbox{\rm{if} $\frac{1}{2} \leq t \leq 1$} \\
    t               & \mbox{\rm{if} $1 \leq t$},
  \end{array}
  \right.
  \\
  g_2(t) & \coloneqq (bba^{-1}b^{-1}ab^{-1})(t)
  = \left \{
  \begin{array}{cc}
    t                 & \mbox{\rm{if} $t \leq 0$}                            \\
    \frac{t}{1-t}     & \mbox{\rm{if} $0 \leq t \leq \frac{1}{3}$}           \\
    \frac{4t-1}{5t-1} & \mbox{\rm{if} $\frac{1}{3} \leq t \leq \frac{1}{2}$} \\
    \frac{1}{2-t}     & \mbox{\rm{if} $\frac{1}{2} \leq t \leq 1$}           \\
    t                 & \mbox{\rm{if} $1 \leq t$}.
  \end{array}
  \right.
\end{align*}
By identifying $\Ca$ with $\{10\eta \mid \eta \in \Ca\}$, one directly verifies that these elements generate an isomorphic copy of $BS(2, 1)$.
See Figure \ref{figure_generators_BS} for the tree diagrams of $g_1$ and $g_2$.
\begin{figure}[tbp]
  \begin{center}
    \includegraphics[width=0.6\linewidth]{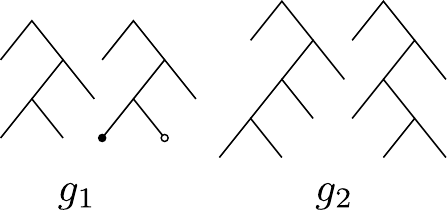}
  \end{center}
  \caption{Tree diagrams of $g_1$ and $g_2$. }
  \label{figure_generators_BS}
\end{figure}
Following the notation in \cite{zbMATH06481126}, we regard this group as a copy of $BS(1, 2)$ in this paper.
In other words, the map $\phi \colon BS(1, 2)=\langle x, t \mid txt^{-1}=x^2 \rangle \to \langle g_1, g_2 \rangle < G_0$ defined by $\phi(t)=g_1$ and $\phi(x)=g_2$ is an isomorphism.
By using this map, we see $BS(1, 2)$ as a subgroup of $G_0$.

Finally, we remark that every element in $BS(1, 2)$ admits a unique expression of the form
\begin{align*}
  t^{-m}x^{N}t^n
\end{align*}
with $m, n \geq 0$, and $N$ may be chosen to be an even number when either $m$ or $n$ is zero.
We refer to this as the \textit{normal form} of the element.
%%%%%%%%%%%%%%%%%%%%%%%%%%%%%%%%%%%%%%%%%$
\section{$BS(1, 2)$ is undistorted in $G_0$} \label{section_BS_undistorted}
\subsection{Lower bound of the word length of $G_0$}
We begin with some preliminary lemmas.
\begin{lemma} \label{lemma_M_upperbound}
  Let $f, g \in G_0$. Then we have $M(fg) \leq 2M(f)M(g)$.
\end{lemma}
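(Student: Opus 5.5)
The plan is to compare linear fractional components piece by piece. Since $fg = g\circ f$, each linear fractional component of $fg$ is contained in an interval $I$ on which $f$ is given by a single matrix $A$ and $g$ is given by a single matrix $B$ on $f(I)$. Concretely, the points of $B(fg)$ lie in $B(f)\cup f^{-1}(B(g))$. So on each component $J$ of $\bR\setminus (B(f)\cup f^{-1}(B(g)))$, the restriction $(fg)\vert_J$ is the linear fractional map corresponding to the matrix product $BA$, up to the convention that composition $g\circ f$ corresponds to left-multiplying $A$ by $B$. A linear fractional component of $fg$ is a union of such intervals $J$ on which the maps agree. Hence the matrix of $(fg)\vert_{I_i}$ is, up to a nonzero scalar, equal to $BA$ for some pair $(A,B)$ of representatives attached to components of $f$ and $g$.

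The entrywise estimate is the elementary step. Each entry of $BA$ is a sum of two products of entries, so it is bounded in absolute value by $2\max|B_{kl}|\cdot\max|A_{kl}|\le 2M(g)M(f)$.

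The main subtlety is normalization. The representative of $(fg)\vert_{I_i}$ in Definition \ref{definition_complexity} is the primitive integer matrix with the sign convention, and it differs from $BA$ by a rational scalar. We must check that passing to it can only decrease entries. Since $A,B\in\mathrm{GL}_2(\bZ)$, the product $BA$ is an integer matrix, and it has determinant $\pm1$. Any integer matrix proportional to it with gcd $1$ is therefore $\pm BA$ itself. Indeed, $BA$ already has gcd of entries equal to $1$: a common divisor $d$ of its entries would give $d^2\mid\det = \pm1$.

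So the normalized representative is $\pm BA$, and its entries have the same absolute values. Taking the maximum over all components gives $M(fg)\le 2M(f)M(g)$. One further point should be recorded to cover the components at infinity: the unbounded components $(-\infty,s_1]$ and $[s_n,\infty)$ are handled identically, because the matrix description applies on each of them as well.
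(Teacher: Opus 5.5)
Your approach is the paper's: bound each entry of the product of the two matrices attached to matching pieces by $2M(f)M(g)$, then take the maximum. Your refinement along $B(f)\cup f^{-1}(B(g))$ supplies detail the paper leaves implicit, and the formulation ``a component of $fg$ is a union of such intervals $J$'' is the right one. Your opening sentence, that each component of $fg$ lies inside a single piece of $f$, is false (take $g=f^{-1}$), but nothing later depends on it.

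\textbf{The gap is in your normalization paragraph.} The matrices attached to elements of $G_0$ are primitive integer matrices, but they are not unimodular, despite the paper's loose wording ``$\mathrm{GL}_2(\bZ)$''. Already $c\vert_{[0,1]}(t)=2t/(t+1)$ corresponds to $\begin{pmatrix}2&0\\1&1\end{pmatrix}$, which has determinant $2$, and no rational rescaling of it has determinant $\pm1$. So $\det(BA)=\pm1$ fails, and so does your conclusion that $BA$ is already primitive. Concretely, take $f=c$ and $g=c^{-1}$ on $[0,1]$:
\[
BA=\begin{pmatrix}1&0\\-1&2\end{pmatrix}\begin{pmatrix}2&0\\1&1\end{pmatrix}=2I,
\]
while the normalized representative of $fg=\mathrm{id}$ is $I$, not $\pm BA$.

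The repair needs no determinant at all. $BA$ is an integer matrix, so the normalized representative of $(fg)\vert_{I_i}$ is $\pm\frac{1}{d}BA$, where $d\ge 1$ is the gcd of the entries of $BA$. Dividing by $d$ can only shrink the absolute values of the entries, so the entrywise bound $2M(f)M(g)$ passes to the normalized matrix. With this replacement your proof is complete and matches the paper's, with the paper's implicit steps filled in.
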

\begin{proof}
  Let $A_1, \dots, A_n$ (resp.~$B_1, \dots, B_m$) denote the matrices associated to the linear fractional components of $f$ (resp.~$g$).
  Note that
  \begin{align*}
    \begin{pmatrix}
      a_{11} & a_{12} \\ a_{21}&a_{22}
    \end{pmatrix}
    \begin{pmatrix}
      b_{11} & b_{12} \\ b_{21}&b_{22}
    \end{pmatrix}
    =\begin{pmatrix}
       a_{11}b_{11}+a_{12}b_{21} & a_{11}b_{12}+a_{12}b_{22} \\ a_{21}b_{11}+a_{22}b_{21}&a_{21}b_{12}+a_{22}b_{22}
     \end{pmatrix}
  \end{align*}
  holds.
  Since we have
  \begin{align*}
    \max\{|a_{t1}b_{1k}+a_{t2}b_{2k}| \mid t, k \in \{1, 2\}\}
    \leq 2\max\{|a_{tk}| \mid t, k \in \{1, 2\}\}\max\{|b_{tk}| \mid t, k \in \{1, 2\}\},
  \end{align*}
  by taking maximum over $A_1, \dots, A_n$ and $B_1, \dots, B_m$, we have the desired result.
\end{proof}
\begin{lemma}\label{lemma_inverse_denominator}
  Let $f \in G_0$ and $I$ be a linear fractional component of $f$.
  Let $p^\prime/q^\prime \in f(I) \cap \bQ$ with $p^\prime, q^\prime \in \mathbb{Z}$, $q^\prime>0$, and $\gcd(|p^\prime|, q^\prime)=1$.
  Then for $p/q \coloneqq f^{-1}(p^\prime/q^\prime)$ with $p, q \in \mathbb{Z}$, $q>0$, and $\gcd(|p|, q)=1$, we have $q \leq 2 M(f) \max\{|p^\prime|, q^\prime\}$.
\end{lemma}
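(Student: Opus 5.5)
On the component $I$, $f$ acts as $t\mapsto (at+b)/(ct+d)$ with the normalized integer matrix $A=\begin{pmatrix} a&b\\ c&d\end{pmatrix}\in\mathrm{GL}_2(\bZ)$ from Definition \ref{definition_complexity}. Its entries are bounded by $M(f)$ in absolute value and $ad-bc=\pm1$. The plan is to write $f^{-1}$ on $f(I)$ via the adjugate of $A$, apply it to $p^\prime/q^\prime$, and read off a bound on the reduced denominator.

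Concretely, $f^{-1}|_{f(I)}$ is $s\mapsto (ds-b)/(-cs+a)$, since the adjugate of $A$ represents the inverse Möbius map; the determinant $\pm1$ only changes an overall sign, which is irrelevant projectively. Substituting $s=p^\prime/q^\prime$ and multiplying numerator and denominator by $q^\prime$ gives
\begin{align*}
  f^{-1}\left(\frac{p^\prime}{q^\prime}\right)=\frac{dp^\prime-bq^\prime}{-cp^\prime+aq^\prime}.
\end{align*}
The right-hand side is a quotient of two integers $P\coloneqq dp^\prime-bq^\prime$ and $Q\coloneqq -cp^\prime+aq^\prime$.

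We need $Q\neq0$ so that this is a genuine rational number. Indeed $p^\prime/q^\prime\in f(I)$, so $f^{-1}(p^\prime/q^\prime)$ is a point of $I\subset\bR$, and the formula above is valid there. If $Q=0$, the value would be $\infty$, which is impossible. (The case $P=Q=0$ cannot occur either, because the matrix is invertible and $(p^\prime,q^\prime)\neq(0,0)$.)

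Since $p/q$ is the reduced form with $q>0$, $q$ divides $|Q|$, so $q\le |Q|$. The triangle inequality then gives
\begin{align*}
  q\le |Q|\le |c||p^\prime|+|a|q^\prime\le 2M(f)\max\{|p^\prime|,q^\prime\}.
\end{align*}

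There is no real obstacle here. The only points needing care are that $f^{-1}$ on $f(I)$ is represented by the inverse matrix with integer entries of the same size, and that the denominator is nonzero because the point is finite. Endpoint issues do not arise, since $p^\prime/q^\prime$ lies in $f(I)$, where the single Möbius formula holds, and by continuity this extends to the closure if needed.
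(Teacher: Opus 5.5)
Your argument is correct and is the same as the paper's: invert $f|_I$ via the adjugate, substitute $p^\prime/q^\prime$, and bound the reduced denominator by $|{-cp^\prime}+aq^\prime|\le 2M(f)\max\{|p^\prime|,q^\prime\}$; your explicit checks that $Q\neq0$ and $q\mid Q$ fill in what the paper leaves implicit. One harmless slip is that the normalized integer matrices for $G_0$ need not have determinant $\pm1$ (for $c$ on $[0,1]$ the matrix is $\begin{pmatrix}2&0\\1&1\end{pmatrix}$, of determinant $2$), but the adjugate represents the inverse M\"obius map for any nonzero determinant, so your proof is unaffected.
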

\begin{proof}
  If the restriction $f\vert_I$ is written as $f\vert_I(t)=(at+b)/(ct+d)$, then
  \begin{align*}
    \frac{p}{q}=\frac{d\frac{p^\prime}{q^\prime}-b}{-c\frac{p^\prime}{q^\prime}+a}=\frac{dp^\prime-bq^\prime}{-cp^\prime+aq^\prime}
  \end{align*}
  holds.
  Since $\gcd(|p|, q)=1$, we have
  \begin{align*}
    q \leq |-cp^\prime+aq^\prime| \leq |c||p^\prime|+|a|q^\prime \leq 2M(f) \max \{|p^\prime|, q^\prime\},
  \end{align*}
  as required.
\end{proof}
\begin{lemma}\label{lemma_composition_breakpoints}
  Let $f, g \in G_0$.
  Then we have
  \begin{align*}
    B(fg) \subset B(f) \cup f^{-1}(B(g)).
  \end{align*}
\end{lemma}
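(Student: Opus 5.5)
We prove the contrapositive: every point outside $B(f) \cup f^{-1}(B(g))$ is a regular point of $fg$. Recall the convention $fg = g \circ f$, so $fg$ applies $f$ first and then $g$.

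Take $t \in \bR$ with $t \notin B(f)$ and $f(t) \notin B(g)$; the second condition is the same as $t \notin f^{-1}(B(g))$. Since $t$ is a regular point of $f$, there is an open neighborhood $U$ of $t$ on which
\begin{align*}
  f\vert_U(s) = \frac{a_1 s + b_1}{c_1 s + d_1}.
\end{align*}
Since $f(t)$ is a regular point of $g$, there is an open neighborhood $V$ of $f(t)$ on which
\begin{align*}
  g\vert_V(u) = \frac{a_2 u + b_2}{c_2 u + d_2}.
\end{align*}

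Because $f$ is a homeomorphism of $\bR$, and in particular continuous, the set $W \coloneqq U \cap f^{-1}(V)$ is an open neighborhood of $t$. On $W$ we have $(fg)(s) = g(f(s))$. This is the composite of two linear fractional maps, and such a composite is again linear fractional: its matrix is the product of the two matrices in $\mathrm{PGL}_2(\bR)$, and the determinant condition is preserved. Hence $(fg)\vert_W$ has the form $(as+b)/(cs+d)$, so $t$ is a regular point of $fg$, i.e.\ $t \notin B(fg)$.

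Taking complements gives $B(fg) \subset B(f) \cup f^{-1}(B(g))$. No step is a real obstacle. The only care needed is the composition order: by the convention, the breakpoints of $g$ must be pulled back by $f$, and continuity of $f$ is what makes $W$ open.
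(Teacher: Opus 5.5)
Your proof is correct and follows the same approach as the paper: take a point $x \notin B(f)$ with $f(x) \notin B(g)$, pass to the neighborhood $U \cap f^{-1}(V)$, and note that there $fg = g \circ f$ is a composite of two linear fractional maps. You also state the final step explicitly, namely that this composite is again linear fractional so the point is regular for $fg$; the paper leaves that step implicit.
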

\begin{proof}
  Assume that $x \not\in B(f)$ and $f(x) \not \in B(g)$.
  It is sufficient to show that $x$ is not in $B(fg)$.

  Since $x \not \in B(f)$, there exists a neighborhood $U$ of $x$ on which $f\vert_U(t)=(at+b)/(ct+d)$.
  Similarly, since $f(x) \not \in B(g)$, there exists a neighborhood $V$ of $f(x)$ such that $g\vert_V(t)=(a^\prime t +b^\prime)/(c^\prime t +d^\prime)$.
  Hence $U \cap f^{-1}(V)$ is a neighborhood of $x$ contained in $U$, and its image under $f$ lies in $V$.
\end{proof}
%\begin{remark}
%  For $f, g \in G_0$, if $x \not \in B(f)$ and $f(x) \in B(g)$ hold, then $x$ is in $B(fg)$.
%  Indeed, if $x \not \in B(fg)$, then $f(x) \not \in B(g)$ either.
%\end{remark}
In the following, we write the standard generating set $\{a^{\pm 1}, b^{\pm 1}, c^{\pm 1}\}$ as $S_{G_0}$.
We compute how the functions defined in Definition \ref{definition_complexity} behave under the multiplication of generators.
\begin{lemma}\label{lemma_estimation_D(f)}
  Let $f \in G_0$.
  Then for any $s \in S_{G_0}$, we have
  \begin{align*}
    D(fs) \leq 4 \max\{D(f), M(f)\}=4\cC(f).
  \end{align*}
\end{lemma}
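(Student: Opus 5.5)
By Lemma \ref{lemma_composition_breakpoints}, every breakpoint of $fs$ lies in $B(f) \cup f^{-1}(B(s))$. Points of $B(f)$ have denominator at most $D(f) \leq \cC(f)$, so only breakpoints of the form $x = f^{-1}(r)$ with $r \in B(s)$ need attention.

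First I would list the sets $B(s)$ for $s \in S_{G_0}$ from the explicit formulas for the generators and their inverses in the Remark:
\begin{align*}
  B(a^{\pm1})=\emptyset, \quad B(b)=\{0,\tfrac12,1\}, \quad B(b^{-1})=\{0,1,2\}, \quad B(c^{\pm1})=\{0,1\}.
\end{align*}
So every $r=p^\prime/q^\prime \in B(s)$, written in lowest terms with $q^\prime>0$, satisfies $\max\{|p^\prime|,q^\prime\}\leq 2$.

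Next, fix $x=f^{-1}(r)$ with $r\in B(s)$. If $x \in B(f)$, its denominator is at most $D(f)$ and we are done. Otherwise $x$ lies in some linear fractional component $I$ of $f$, and $r \in f(I)\cap\bQ$. Lemma \ref{lemma_inverse_denominator} then gives, for the reduced denominator $q$ of $x$,
\begin{align*}
  q \leq 2M(f)\max\{|p^\prime|,q^\prime\} \leq 4M(f).
\end{align*}
Taking the maximum over all breakpoints of $fs$ yields $D(fs)\leq \max\{D(f),4M(f)\}\leq 4\cC(f)$. If $B(fs)=\emptyset$, then $D(fs)=1\leq 4\cC(f)$, since $\cC(f) \geq 1$.

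This argument implicitly uses that $f^{-1}(r)$ is rational. That holds because either $x$ is a breakpoint of $f$ (which is rational by the definition of $D$), or $f$ is given near $x$ by an integer matrix of determinant $\pm1$, which maps rationals to rationals. I expect no step to be a real obstacle: the whole argument is a direct combination of Lemmas \ref{lemma_composition_breakpoints} and \ref{lemma_inverse_denominator}. The only care needed is getting the breakpoint sets of $b^{-1}$ and $c^{\pm1}$ right, which is where the factor $4$ comes from (the largest numerator is $|p^\prime|=2$ at $r=2$).
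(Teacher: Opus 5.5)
Your proposal is correct and follows the paper's proof essentially step for step: you use Lemma \ref{lemma_composition_breakpoints} to reduce to breakpoints $x=f^{-1}(r)$ with $r\in B(s)$, note that $\max\{|p'|,q'\}\le 2$ for the generators' breakpoints, and apply Lemma \ref{lemma_inverse_denominator} to get $q\le 4M(f)$; your treatment of the case $B(fs)=\emptyset$ is a small extra that the paper omits. One harmless slip: the local integer matrices need not have determinant $\pm1$ (e.g.\ $c(t)=2t/(1+t)$ on $[0,1]$ has determinant $2$), but rationality of $f^{-1}(r)$ only needs integer entries and nonzero determinant, and it follows directly from the formula in the proof of Lemma \ref{lemma_inverse_denominator}.
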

\begin{proof}
  Note that by Lemma \ref{lemma_composition_breakpoints}, we have
  \begin{align*}
    B(fs) \subset B(f) \cup f^{-1}(B(s)).
  \end{align*}
  Let $x \in B(fs)$ and write $x=p/q$ with $p, q \in \bZ$, $\gcd(|p|, q)=1$, and $q>0$.
  If $x \in B(f)$, clearly we have $q \leq D(f)$.

  Assume that $x \not \in B(f)$ and $f(x) \in B(s)$.
  Since $x \not \in B(f)$, there exists a neighborhood $U$ of $x$ such that $f\vert_U(t)=(at+b)/(ct+d)$ holds.
  From the definition of $M(f)$, we have $\max\{|a|, |b|, |c|, |d|\} \leq M(f)$.
  Also, since $f(x)=p^\prime / q^\prime \in B(s)$ and each generator has breakpoints with denominator and numerator at most two, we have $\max\{|p^\prime|, q^\prime\} \leq 2$.
  By applying Lemma \ref{lemma_inverse_denominator} to $x=f^{-1}(p^\prime/ q^\prime)$, we get
  \begin{align*}
    q
    \leq 2M(f) \max\{|p^\prime|, q^\prime\}
    \leq 4M(f),
  \end{align*}
  which implies the claim.
\end{proof}
\begin{lemma}\label{lemma_estimation_M(f)}
  Let $f \in G_0$. Then for any $s \in S_{G_0}$, we have
  \begin{align*}
    M(fs) \leq 6 M(f).
  \end{align*}
\end{lemma}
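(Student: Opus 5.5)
Since $fs = s\circ f$, on each linear fractional component of $fs$ the map is the composite of a piece of $f$ followed by a piece of $s$. Each such component $J$ lies inside a component $I$ of $f$, with $f(J)$ inside a component of $s$. So the matrix of $(fs)\vert_J$ is, up to the normalization (gcd and sign), the product $S_k A_i$. Here $A_i$ is the matrix of $f\vert_I$ and $S_k$ is the matrix of the relevant piece of $s$. Normalization only divides by a positive gcd and possibly changes sign, so the maximal absolute entry does not increase. It therefore suffices to bound $\max|(S_kA_i)_{tl}|$ by $6M(f)$ for every piece $S_k$ of every generator $s\in S_{G_0}$.

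First I will list the matrices of the pieces of the six generators, read off from the formulas in the definition of $G_0$ and the subsequent remark:
\begin{align*}
a^{\pm1}&\colon \begin{pmatrix}1&\pm1\\0&1\end{pmatrix}; \qquad
c\colon \begin{pmatrix}2&0\\1&1\end{pmatrix},\ I; \qquad
c^{-1}\colon \begin{pmatrix}1&0\\-1&2\end{pmatrix},\ I;\\
b&\colon I,\ \begin{pmatrix}1&0\\-1&1\end{pmatrix},\ \begin{pmatrix}3&-1\\1&0\end{pmatrix},\ \begin{pmatrix}1&1\\0&1\end{pmatrix}; \qquad
b^{-1}\colon I,\ \begin{pmatrix}1&0\\1&1\end{pmatrix},\ \begin{pmatrix}0&1\\-1&3\end{pmatrix},\ \begin{pmatrix}1&-1\\0&1\end{pmatrix}.
\end{align*}
Each entry of a product $S_kA_i$ has the form $\alpha u+\beta v$. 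Here $(\alpha,\beta)$ is a row of $S_k$ and $u,v$ are entries of $A_i$ with $|u|,|v|\le M(f)$. So the entry is bounded by $(|\alpha|+|\beta|)M(f)$.

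The largest row sum $|\alpha|+|\beta|$ appearing above is $4$, coming from the row $(3,-1)$ of $b$ and the row $(-1,3)$ of $b^{-1}$. This gives $M(fs)\le 4M(f)\le 6M(f)$. The constant $6$ in the statement leaves some slack; it would also absorb a cruder bound such as $2\cdot 3\cdot M(f)$, obtained as in Lemma \ref{lemma_M_upperbound} from $M(s)\le 3$.

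The only point needing care is the identification of pieces. I must check that a component of $fs$ indeed sits inside a single piece of $f$ whose image lies in a single piece of $s$, which is essentially the content of Lemma \ref{lemma_composition_breakpoints}. I must also handle the case where two adjacent pieces fuse into one component of $fs$; there the matrix is still one of the products considered. With these checks the bound follows, and I expect no real obstacle beyond them.
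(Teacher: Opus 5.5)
Your proof is correct and is essentially the paper's argument: the paper just applies Lemma~\ref{lemma_M_upperbound} with $M(s)\le 3$ to get $M(fs)\le 2\cdot 3\cdot M(f)$, whereas you redo that same entrywise estimate on products of piece matrices using the actual row sums of the generators' matrices, which gives the sharper bound $4M(f)$. One small correction: your claim that each component $J$ of $fs$ lies inside a single component of $f$ is false once pieces fuse. Lemma~\ref{lemma_composition_breakpoints} gives the reverse containment: each component of $\bR\setminus(B(f)\cup f^{-1}(B(s)))$ lies inside a component of $fs$. That reverse containment is exactly what your fusion remark needs, since the M\"obius map on a component of $fs$ is determined by its restriction to any such subinterval, and there it equals $S_kA_i$ projectively.
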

\begin{proof}
  Since $s \in S_{G_0}$, we have $M(s) \leq 3$.
  By Lemma \ref{lemma_M_upperbound}, we have
  \begin{align*}
    M(fs) \leq 2M(f)M(s) \leq 6M(f),
  \end{align*}
  as required.
\end{proof}
\begin{proposition}\label{proposition_estimation_C(f)}
  Let $f \in G_0$. Then for any $s \in S_{G_0}$, we have
  \begin{align*}
    \cC(fs) \leq 6\cC(f).
  \end{align*}
\end{proposition}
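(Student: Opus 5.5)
The plan is to combine Lemma \ref{lemma_estimation_D(f)} and Lemma \ref{lemma_estimation_M(f)} directly. By definition $\cC(fs)=\max\{D(fs),M(fs)\}$, so it suffices to bound each of the two quantities separately by $6\cC(f)$.

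For the denominator term, Lemma \ref{lemma_estimation_D(f)} gives $D(fs)\le 4\cC(f)$, and $4\cC(f)\le 6\cC(f)$ holds trivially. For the matrix term, Lemma \ref{lemma_estimation_M(f)} gives $M(fs)\le 6M(f)$, and $M(f)\le\cC(f)$ by the definition of $\cC$. Hence $M(fs)\le 6\cC(f)$. Taking the maximum of the two bounds yields $\cC(fs)\le 6\cC(f)$.

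There is essentially no obstacle, since the real content lies in the two preceding lemmas. The only point to check is that the degenerate case $B(fs)=\emptyset$ is covered. In that case $D(fs)=1$, and $1\le M(f)\le \cC(f)$ holds because every matrix representative in $\mathrm{GL}_2(\bZ)$ has a nonzero entry. This case is in any event already implicit in Lemma \ref{lemma_estimation_D(f)}.
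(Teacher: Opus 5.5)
Your proposal is correct and is essentially the paper's proof. Both bound $D(fs)\le 4\cC(f)\le 6\cC(f)$ via Lemma \ref{lemma_estimation_D(f)} and $M(fs)\le 6M(f)\le 6\cC(f)$ via Lemma \ref{lemma_estimation_M(f)}, then take the maximum; your extra check of the case $B(fs)=\emptyset$ is harmless and already covered since $\cC(f)\ge 1$.
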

\begin{proof}
  By Lemmas \ref{lemma_estimation_D(f)} and \ref{lemma_estimation_M(f)}, we have
  \begin{align*}
    D(fs) & \leq 4\cC(f), & M(fs) & \leq 6M(f) \leq 6\cC(f).
  \end{align*}
  Therefore $\cC(fs)=\max\{D(fs), M(fs)\} \leq 6\cC(f)$.
\end{proof}
As a corollary, we obtain a lower bound for the word length. 
\begin{corollary} \label{corollary_wordlength_estimate_C}
  For every $f \in G_0$, we have
  \begin{align*}
    {\|f\|}_{S_{G_0}} \geq \frac{1}{2} \log \cC(f).
  \end{align*}
\end{corollary}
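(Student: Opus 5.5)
The plan is to iterate Proposition \ref{proposition_estimation_C(f)} along a geodesic word for $f$, starting from the identity, and then take logarithms.

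First compute the base case. For the identity map $\mathrm{id}$ we have $B(\mathrm{id})=\emptyset$, so $D(\mathrm{id})=1$ by convention. The only linear fractional component is $\bR$, with normalized matrix the identity, so $M(\mathrm{id})=1$. Hence $\cC(\mathrm{id})=1$.

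Now let $n={\|f\|}_{S_{G_0}}$ and write $f=s_1s_2\cdots s_n$ with $s_i\in S_{G_0}$. Put $f_k=s_1\cdots s_k$, so $f_0=\mathrm{id}$ and $f_{k}=f_{k-1}s_k$. Applying Proposition \ref{proposition_estimation_C(f)} to each $f_{k-1}$ and $s_k$ and inducting on $k$ gives
\begin{align*}
\cC(f)=\cC(f_n)\leq 6^n\,\cC(\mathrm{id})=6^n.
\end{align*}
Taking logarithms gives $\log\cC(f)\leq n\log 6$, that is, ${\|f\|}_{S_{G_0}}\geq \log\cC(f)/\log 6$.

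To reach the stated constant, note that $\log 6<2$ (natural logarithm), so $1/\log 6>1/2$. Since $\cC(f)\geq 1$, the quantity $\log\cC(f)$ is nonnegative, and therefore $\log\cC(f)/\log 6\geq \tfrac12\log\cC(f)$, which is the claim.

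There is no real obstacle here. The only points needing care are the base value $\cC(\mathrm{id})=1$, which follows from the conventions in Definition \ref{definition_complexity}, and the nonnegativity of $\log\cC(f)$, which is what allows the constant $1/\log 6$ to be replaced by $1/2$.
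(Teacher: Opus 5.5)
Your proof is correct and matches the paper's argument: you iterate the one-step bound $\cC(fs)\le 6\,\cC(f)$ along a geodesic word to get $\cC(f)\le 6^{n}$, then use $\log_6\cC(f)\ge \frac{1}{2}\log\cC(f)$. You also make explicit two details the paper leaves implicit: that $\cC(\mathrm{id})=1$, and that $\log\cC(f)\ge 0$, which is what lets the constant $1/\log 6$ be replaced by $1/2$.
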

\begin{proof}
  Let ${\|f\|}_{S_{G_0}}=n$, and write $f=s_1 \cdots s_n$ as a geodesic word.
  By iteratively applying Proposition \ref{proposition_estimation_C(f)}, we have $\cC(f) \leq 6^n$.
  Hence
  \begin{align*}
    {\|f\|}_{S_{G_0}}=n \geq \log_6 \cC(f) \geq \frac{1}{2} \log \cC(f),
  \end{align*}
  as required.
\end{proof}
\subsection{Estimation of the word length of elements in $BS(1, 2)$ with respect to $G_0$}
We define $S_{BS(1, 2)}$ to be the generating set $\{g_1, g_2\}$ of $\langle g_1, g_2 \rangle$.
It suffices to prove that there exists $\lambda \geq 1$ and $k \geq 0$ such that for any $g \in \langle g_1, g_2 \rangle$, we have
\begin{align*}
  \frac{1}{\lambda}{\|g\|}_{S_{BS(1, 2)}}-k \leq {\|g\|}_{S_{G_0}}.
\end{align*}
To show this claim, we use the following result:
\begin{proposition}[{\cite[Proposition 2.1]{zbMATH06481126}}] \label{proposition_Burillo_BS}
  There exist constants $C_1, C_2, D_1, D_2>0$ such that for every element $x=t^{-m}a^Nt^n$ of $BS(1, 2)$ where $t^{-m}a^Nt^n$ is the normal form of $x$ and $N \neq 0$, we have
  \begin{align*}
    C_1 (m+n + \log |N|) -D_1 \leq \|x\| \leq C_2(m+n+ \log |N|)+D_2,
  \end{align*}
  where $\|x\|$ is the word length of $x$ with respect to $\{x, t\}$.
\end{proposition}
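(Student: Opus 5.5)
The plan is to prove the two inequalities separately. For both I would use the faithful affine representation of $BS(1,2)=\langle x,t\mid txt^{-1}=x^2\rangle$, sending $x\mapsto(y\mapsto y+1)$ and $t\mapsto(y\mapsto 2y)$ up to the composition convention. With this representation a word $w$ of length $L$ determines a map $y\mapsto 2^{k}y+r$. Here $k$ is the $t$-exponent sum of $w$. The translation part is $r=\sum_i \pm 2^{h_i}$, where the sum runs over the $x^{\pm1}$-letters of $w$ and $h_i$ is the height of the $i$-th such letter, meaning the partial $t$-exponent sum preceding it (with the appropriate sign convention). For the normal form $t^{-m}x^Nt^n$ one computes directly that $k=n-m$ and $r=N\cdot 2^{-m}$, up to fixing conventions. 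Moreover, when $m,n>0$ the normal form has $N$ odd, so $2^{-m}$ is exactly the $2$-adic denominator of $r$.

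\textbf{Upper bound.} I would use the relation $x^{2j}=t\,x^{j}\,t^{-1}$ together with Horner's scheme on the binary expansion of $|N|$. Writing $|N|=2N'+\varepsilon$ with $\varepsilon\in\{0,1\}$ gives $x^{N}=t\,x^{\pm N'}t^{-1}x^{\pm\varepsilon}$. By induction, $x^N$ has length at most $3\log_2|N|+O(1)$. Prepending $t^{-m}$ and appending $t^n$ then yields $\|x\|\le m+n+3\log_2|N|+O(1)$, which is the right-hand inequality.

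\textbf{Lower bound.} Let $w$ be a geodesic word of length $L$ representing the element. Set $h_{\min}$ and $h_{\max}$ to be the minimal and maximal partial $t$-exponent sums along $w$, both taken to include $0$ and the final value $n-m$. We have $r=\sum\pm2^{h_i}\in 2^{h_{\min}}\bZ$, while $r=N2^{-m}$ with $N$ odd when $m>0$. Comparing denominators gives $-m\ge h_{\min}$. Since the height walk changes by $\pm1$ per $t$-letter, starts at $0$, visits $h_{\min}$ and $h_{\max}$, and ends at $n-m$, its total variation is at least roughly $2(h_{\max}-h_{\min})$. This gives
\begin{align*}
L\ \ge\ h_{\max}-h_{\min}\ \ge\ \max\{m,\,n\}.
\end{align*}
For $N$ I would bound $|r|\le L\,2^{h_{\max}}$, so $|N|\le L\,2^{h_{\max}+m}$ and hence $\log_2|N|\le \log_2 L+h_{\max}-h_{\min}\le \log_2 L+L$. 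Summing the three estimates gives $m+n+\log|N|\le C L$, which is the left-hand inequality with suitable $C_1,D_1$.

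\textbf{Main obstacle.} I expect the hardest step to be the bookkeeping in the exceptional normal forms, where $m=0$ or $n=0$ and $N$ is merely even. There the $2$-adic denominator of $r$ no longer equals $2^{-m}$. I would handle this by replacing $m$ by $m+v_2(N)$ when comparing denominators. If $m=0$, the bound $m\le L$ is trivial anyway. If instead $n=0$, the lost information on $n$ is recovered from $|k|=|n-m|\le L$. I would also need to fix the convention $fg=g\circ f$ consistently so that the formula $r=N2^{-m}$, rather than $N2^{m}$, is the correct one. If it comes out the other way, the roles of $h_{\min}$ and $h_{\max}$ swap symmetrically and the argument is unchanged.
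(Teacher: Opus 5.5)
Your proof is correct. It necessarily differs from the paper, which gives no proof of this statement: the paper quotes it from \cite{zbMATH06481126} and uses it as a black box.

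\textbf{What your route does.} You give a self-contained argument in the affine model $\mathbb{Z}[1/2]\rtimes\bZ$. With $x\mapsto y+1$, $t\mapsto 2y$ and left composition one does get $t^{-m}x^Nt^n\mapsto 2^{n-m}y+N2^{-m}$. For the lower bound you only need this map to be a homomorphism; faithfulness is never used.
\begin{itemize}
\item Horner's scheme gives $\|x\|\le m+n+3\log_2|N|+1$.
\item Your height bookkeeping gives $m+n\le 2(h_{\max}-h_{\min})\le 2L$ and $\log_2|N|\le \log_2 L+L\le 2L$, where $L\ge 1$ because $r\neq 0$.
\end{itemize}
So you even obtain explicit constants: $C_1=1/4$, any $D_1>0$, $C_2=3/\log 2$, $D_2=1$.

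\textbf{Relation to the paper.} The paper only ever uses the right-hand inequality (in Proposition \ref{proposition_BS_undistorted_Nneq0}). For the paper's purposes, your short Horner paragraph alone could replace the citation. The citation simply saves space.

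\textbf{Details to tidy.} None of these affects validity.
\begin{itemize}
\item The total variation of the height walk need not be close to $2(h_{\max}-h_{\min})$. A monotone walk from $0$ to $n-m$ has variation exactly $h_{\max}-h_{\min}$. You only use $L\ge h_{\max}-h_{\min}$, which is true.
\item Your ``main obstacle'' is in fact absent. Since $h_{\min}\le\min\{0,n-m\}$ by definition, $h_{\min}\le -m$ holds automatically when $m=0$ or $n=0$. So the $2$-adic comparison is needed only when $m,n>0$, where $N$ is odd. In particular, the step $h_{\max}+m\le h_{\max}-h_{\min}$ in your bound on $\log_2|N|$ is valid in every case.
\item As written, your proposed fix has two slips. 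Since $v_2(r)=v_2(N)-m$, one would replace $m$ by $m-v_2(N)$, not $m+v_2(N)$. And when $n=0$, it is $m$ (not $n$) that is recovered from $|n-m|\le L$.
\end{itemize}
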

We begin by providing some basic lemmas regarding the iterates of $g_1$ and $g_2$.
\begin{lemma} \label{lemma_iterates_g1}
  Let $n$ be a nonnegative integer.
  Then for any $t \in [0, 1/2]$, we have
  \begin{align*}
    g_1^n(t)=\frac{2^n t}{2(2^n-1)t+1},
  \end{align*}
  and for any $t \in [1/2, 1]$, we have
  \begin{align*}
    g_1^{-n}(t)=\frac{(2^{n+1}-1)t+(1-2^n)}{(2^{n+1}-2)t+(2-2^n)}.
  \end{align*}
\end{lemma}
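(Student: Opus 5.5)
We argue by induction on $n$, using the matrix representatives of the linear fractional pieces and the fact that $g_1$ maps $[0,1/2]$ onto $[0,1/2]$ and $[1/2,1]$ onto $[1/2,1]$.

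First check the invariance. On $[0,1/2]$ we have $g_1(t)=2t/(2t+1)$, which is increasing with $g_1(0)=0$ and $g_1(1/2)=1/2$. Hence $g_1$ restricts to a self-homeomorphism of $[0,1/2]$. Likewise on $[1/2,1]$ the map $g_1(t)=1/(3-2t)$ fixes $1/2$ and $1$, so $[1/2,1]$ is invariant as well. Consequently, for $t\in[0,1/2]$ every iterate $g_1^k(t)$ stays in $[0,1/2]$, and $g_1^n$ there is the $n$-fold composition of the single Möbius map $t\mapsto 2t/(2t+1)$. The same holds on $[1/2,1]$ for $g_1^{-1}$, which is the Möbius map inverse to $t\mapsto 1/(3-2t)$, namely $t\mapsto(3t-1)/(2t)$.

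Each iterate therefore corresponds to a power of a $2\times 2$ matrix, and we proceed in two steps.

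\emph{The iterate on $[0,1/2]$.} Write $P=\begin{pmatrix}2&0\\2&1\end{pmatrix}$ for the map $t\mapsto 2t/(2t+1)$. We claim
\begin{align*}
P^n=\begin{pmatrix}2^n&0\\2(2^n-1)&1\end{pmatrix}.
\end{align*}
The case $n=0$ is trivial. For the inductive step, the lower-left entry satisfies $x_{n+1}=2x_n+2$ (or $2+x_n\cdot 1$, depending on the order of multiplication; both recursions yield $2(2^{n+1}-1)$ from $x_n=2(2^n-1)$). Since composition of Möbius maps corresponds to matrix multiplication up to the order convention, and powers of a single matrix commute, the convention $fg=g\circ f$ causes no issue. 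This gives the first formula.

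\emph{The iterate on $[1/2,1]$.} The map $g_1^{-1}$ corresponds to $Q=\begin{pmatrix}3&-1\\2&0\end{pmatrix}$. We prove by induction that
\begin{align*}
Q^n=\begin{pmatrix}2^{n+1}-1&1-2^n\\2^{n+1}-2&2-2^n\end{pmatrix}.
\end{align*}
To do this, multiply $Q^n$ by $Q$ and simplify each entry. For example, the top-left entry becomes $3(2^{n+1}-1)-(2^{n+1}-2)=2^{n+2}-1$, and the other three entries are checked the same way. Alternatively, one can diagonalize: $Q$ has eigenvalues $1$ and $2$, with fixed points $1/2$ and $1$, so $Q^n$ is an affine combination of $I$ and $Q$ with coefficients $2-2^n$ and $2^n-1$. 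This check confirms the stated matrix, since
\begin{align*}
(2-2^n)I+(2^n-1)Q=\begin{pmatrix}2^{n+1}-1&1-2^n\\2^{n+1}-2&2-2^n\end{pmatrix}.
\end{align*}

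No step is a real obstacle. The only thing to keep in mind is that the invariance of the two intervals is what justifies composing a single linear fractional piece $n$ times. The entry arithmetic is routine.
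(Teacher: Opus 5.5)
Your proof is correct and follows the paper's route. The invariance of $[0,1/2]$ and $[1/2,1]$ under $g_1$ reduces both formulas to powers of a single linear fractional map, which you then verify by induction; your matrix computations and the eigenvalue check for $Q^n$ are right.

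One small slip is in the parenthetical for $P^n$. The other multiplication order gives the recursion $x_{n+1}=2^{n+1}+x_n$, not $2+x_n$ (which would yield $2^{n+1}$). This is harmless: since $P^nP=PP^n$, the recursion $x_{n+1}=2x_n+2$ alone suffices.
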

\begin{proof}
  These equalities can be easily verified by induction since $g_1([0, 1/2])=[0, 1/2]$ and $g_1([1/2, 1])=[1/2, 1]$ hold.
\end{proof}
\begin{lemma} \label{lemma_iterates_g2}
  Let $N$ be a positive integer. Then we have 
	\begin{align*}
    g_2^N\vert_{[0, \frac{1}{N+2}]}(t)&=\frac{t}{1-Nt}, & g_2^N\vert_{[\frac{1}{N+2}, \frac{1}{N+1}]}(t)=\frac{(N+3)t-1}{(N+4)t-1}. 
  \end{align*}
%  and $x_N \in B(g_2^N)$ holds.
\end{lemma}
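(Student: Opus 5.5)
We argue by induction on $N$, using only the explicit formula for $g_2$ and the composition convention $fg = g \circ f$. Since powers of a single element commute, the convention causes no trouble: $g_2^{N+1} = g_2^N g_2 = g_2 \circ g_2^N$.

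For $N = 1$ the two formulas are exactly the second and third pieces of the definition of $g_2$. The first piece is $t/(1-t)$ on $[0,1/3]$, and $1/(N+2) = 1/3$. The second piece is $(4t-1)/(5t-1)$ on $[1/3, 1/2]$, which agrees with $((N+3)t-1)/((N+4)t-1)$ at $N = 1$.

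For the induction step, suppose both formulas hold for $N$.

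First take $t \in [0, 1/(N+3)]$. Then $t \le 1/(N+2)$, so $g_2^N(t) = t/(1-Nt)$. This is increasing in $t$ and equals $1/3$ exactly when $t = 1/(N+3)$, so $g_2^N(t) \in [0, 1/3]$. Applying the first piece of $g_2$ gives the Möbius composition
\begin{align*}
\begin{pmatrix} 1 & 0 \\ -N & 1 \end{pmatrix}\begin{pmatrix} 1 & 0 \\ -1 & 1 \end{pmatrix} = \begin{pmatrix} 1 & 0 \\ -(N+1) & 1 \end{pmatrix},
\end{align*}
that is, $g_2^{N+1}(t) = t/(1-(N+1)t)$.

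Next take $t \in [1/(N+3), 1/(N+2)]$. Since $t/(1-Nt)$ equals $1/3$ at $t = 1/(N+3)$ and $1/2$ at $t = 1/(N+2)$, we get $g_2^N(t) \in [1/3, 1/2]$. Applying the second piece of $g_2$ to $s = t/(1-Nt)$ and clearing denominators gives
\begin{align*}
\frac{4s-1}{5s-1} = \frac{4t-(1-Nt)}{5t-(1-Nt)} = \frac{(N+4)t-1}{(N+5)t-1}.
\end{align*}
This is the claimed formula with $N$ replaced by $N+1$ on $[1/(N+3), 1/(N+2)]$.

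Only the first formula at level $N$ was used in the step; the second formula is simply the output of each step.

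The main point needing care is checking where the intermediate point $g_2^N(t)$ lands. This requires the monotonicity of $t/(1-Nt)$ on the relevant range, where $1 - Nt > 0$ because $t < 1/N$. It also requires the endpoint values $1/3$ and $1/2$, so that the correct piece of $g_2$ is applied. Once that is settled, the rest is a two-line matrix computation.
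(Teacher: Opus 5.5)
Your proof is correct and is essentially the paper's argument: induction on $N$ by composing the linear fractional pieces of $g_2$. The one difference is the end from which you peel a factor. You write $g_2^{N+1}=g_2\circ g_2^N$, so the second formula comes straight from the first formula plus the piece $(4s-1)/(5s-1)$. The paper instead writes $g_2^N=g_2^{N-1}\circ g_2$ on $[1/(N+2),1/(N+1)]$, applying the piece $t/(1-t)$ first and then the inductive hypothesis for the second formula. Both compute the same composite, namely $N-1$ applications of $t/(1-t)$ followed by one application of $(4t-1)/(5t-1)$.
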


\begin{proof}
  Since $g_2^N(1/(N+2))=g_2^{N-1}(1/(N+1))=\cdots =g_2(1/3)$ hold, the equality $g_2^N\vert_{I_N}(t)=t/(1-Nt)$ is obtained by composing $g_2(t)=t/(1-t)$ with itself $N$ times.

  We prove the second equality by induction.
  If $N=1$, it is clear.
  Assume that
  \begin{align*}
    g_2^{N-1}\vert_{[\frac{1}{N+1}, \frac{1}{N}]}(t)=\frac{(N+2)t-1}{(N+3)t-1}
  \end{align*}
  holds for $N \geq 2$.
  Since $[1/(N+2), 1/(N+1)] \subset [0, 1/3]$ holds, we have $g_2([1/(N+2), 1/(N+1)])=[1/(N+1), 1/N]$ and
  \begin{align*}
    g_2^N\vert_{[\frac{1}{N+2}, \frac{1}{N+1}]}(t)=g_2^{N-1}(\frac{t}{1-t})
    =\frac{(N+2)\frac{t}{1-t}-1}{(N+3)\frac{t}{1-t}-1}=\frac{(N+3)t-1}{(N+4)t-1},
  \end{align*}
  as required.
%  Finally, we can see that if
%  \begin{align*}
%    \frac{t}{1-Nt}=\frac{(N+3)t-1}{(N+4)t-1}
%  \end{align*}
%  holds, then we have $t=1/(N+2)$.
%  Therefore, $x_N$ is a breakpoint of $g_2^N$.
\end{proof}

We first consider the case of $N=0$.
\begin{lemma} \label{lemma_undistorted_N=0}
  Let $k \in \bZ$. Then
  \begin{align*}
    \frac{1}{4}{\|g_1^k\|}_{S_{BS(1, 2)}} \leq {\|g_1^k\|}_{S_{G_0}}
  \end{align*}
  holds.
\end{lemma}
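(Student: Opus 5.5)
We will combine the trivial upper bound ${\|g_1^k\|}_{S_{BS(1, 2)}} \leq |k|$ with the lower bound of Corollary \ref{corollary_wordlength_estimate_C}, so the task reduces to showing that $\cC(g_1^k)$ grows exponentially in $|k|$. The case $k=0$ is trivial. Since ${\|f^{-1}\|}_{S_{G_0}}={\|f\|}_{S_{G_0}}$ (the set $S_{G_0}$ is symmetric) and ${\|g_1^{-k}\|}_{S_{BS(1,2)}}={\|g_1^k\|}_{S_{BS(1,2)}}$, we may assume $k=n>0$.

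The key step is to read off a large matrix entry from Lemma \ref{lemma_iterates_g1}. On $[0,1/2]$ we have $g_1^n(t)=2^n t/(2(2^n-1)t+1)$. The corresponding matrix is $\begin{pmatrix}2^n & 0\\ 2^{n+1}-2 & 1\end{pmatrix}$. Its entries have $\gcd$ equal to $1$ because of the entry $1$, and its top-left entry is positive. So it is exactly the normalized representative used in Definition \ref{definition_complexity}.

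Two small points need checking:
\begin{itemize}
\item Some linear fractional component of $g_1^n$ must contain an open subinterval of $[0,1/2]$. This holds because $g_1^n$ is given on $[0,1/2]$ by a single formula, and $(0,1/2)$ lies in one component.
\item The representative should have determinant $\pm1$. Here the determinant is $2^n$, so the matrix lies in $\mathrm{GL}_2(\bQ)$ rather than $\mathrm{GL}_2(\bZ)$. However, $M$ is defined as the maximum entry of the normalized integer representative, and that representative is the matrix above. Hence $M(g_1^n)\geq 2^{n+1}-2\geq 2^n$ for $n\geq1$.
\end{itemize}
Alternatively, one could use the breakpoint structure: $g_1^n$ has a breakpoint at $1/2$, so $D$ alone is useless here, and $M$ is the right quantity.

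Then Corollary \ref{corollary_wordlength_estimate_C} gives ${\|g_1^n\|}_{S_{G_0}}\geq \frac12\log \cC(g_1^n)\geq \frac12 \log 2^n=\frac{n\log 2}{2}$. Since $\log 2/2\approx 0.347\geq 1/4$, this is at least $n/4\geq \frac14{\|g_1^n\|}_{S_{BS(1,2)}}$, as required.

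The main obstacle is the normalization convention for matrices in Definition \ref{definition_complexity}, specifically confirming that the entry $2^{n+1}-2$ is not divided away. It is not, because the gcd is already $1$.
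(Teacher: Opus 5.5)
Your proposal is correct and follows the paper's proof: you reduce to $k>0$ by symmetry, read off $M(g_1^k)\geq 2^k$ from the formula of Lemma~\ref{lemma_iterates_g1} on $[0,1/2]$, and combine Corollary~\ref{corollary_wordlength_estimate_C} with $\frac12\log 2\geq\frac14$ and the trivial bound ${\|g_1^k\|}_{S_{BS(1,2)}}\leq |k|$. Your remark that this normalized representative has determinant $2^k$ is a correct clarification the paper leaves implicit: the paper's ``$\mathrm{GL}_2(\bZ)$'' must be read as integer matrices whose entries have $\gcd$ equal to $1$.
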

\begin{proof}
  Since the word lengths of $g_1^k$ depend only on
  $|k|$, we may assume $k \geq 0$.
  By Lemma \ref{lemma_iterates_g1}, we have $M(f) \geq 2^k$.
  Hence by Corollary \ref{corollary_wordlength_estimate_C}, we have
  \begin{align*}
    {\|g_1^k\|}_{S_{G_0}} \geq \frac{1}{2} \log (\cC(g_1^k)) \geq \frac{1}{2} \log(2^k) \geq \frac{1}{4} \log_2(2^k)
    =\frac{1}{4}k
    \geq \frac{1}{4} {\|g_1^k\|}_{S_{BS(1, 2)}},
  \end{align*}
  as required.
\end{proof}
\begin{proposition} \label{proposition_BS_undistorted_Nneq0}
  Let $g \in \langle g_1, g_2 \rangle$ and $g_1^{-m} g_2^N g_1^n$ be the normal form of $g$ with $N \neq 0$.
  Then we have
  \begin{align*}
    {\|g\|}_{S_{G_0}}
    \geq \frac{1}{6C_2}{\|g\|}_{S_{BS(1, 2)}}-\frac{D_2}{6C_2}.
  \end{align*}
\end{proposition}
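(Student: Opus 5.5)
The plan is to combine the lower bound ${\|g\|}_{S_{G_0}} \geq \frac{1}{2}\log \cC(g)$ of Corollary \ref{corollary_wordlength_estimate_C} with the upper bound ${\|g\|}_{S_{BS(1,2)}} \leq C_2(m+n+\log|N|)+D_2$ of Proposition \ref{proposition_Burillo_BS}. Rearranging, the statement reduces to showing
\begin{align*}
  \frac{1}{2}\log \cC(g) \geq \frac{1}{6}(m+n+\log|N|),
\end{align*}
that is, $\cC(g)^3 \gtrsim 2^{m+n}|N|$ up to the base of the logarithm. I would obtain this by looking at a single linear fractional component, namely the one adjacent to $0$ from the right. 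There all three factors are explicit by Lemmas \ref{lemma_iterates_g1} and \ref{lemma_iterates_g2}. Since $g_1$ preserves $[0,1/2]$, the formula $g_1^k(t)=2^kt/(2(2^k-1)t+1)$ holds near $0$ for all $k\in\bZ$. Likewise $g_2^N(t)=t/(1-Nt)$ holds near $0$, and also for $N<0$ by inverting.

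All these maps are lower-triangular Möbius maps, so I would compute in the coordinate $u=1/t$, where they become affine. Here $g_1^k$ acts as $u\mapsto 2^{-k}u+2(1-2^{-k})$ and $g_2^N$ acts as $u\mapsto u-N$. Composing in the order $g_1^{-m}$, then $g_2^N$, then $g_1^n$ (recall $fg=g\circ f$) gives
\begin{align*}
  u \longmapsto 2^{m-n}u + 2^{-n}\bigl(2-2^{m+1}-N\bigr) + 2(1-2^{-n}).
\end{align*}
Multiplying through by $2^n$, the integer matrix of $g$ on the component at $0^+$ has entries, up to sign and position, $2^n$, $2^m$, $0$ and $c \coloneqq 2^{n+1}-2^{m+1}-N$. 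I would then check primitivity using the normal form. If $m,n\geq 1$, then $N$ is odd, so $c$ is odd and the gcd is $1$. If $m=0$ or $n=0$, one entry equals $1$. So no cancellation occurs, and $M(g)\geq \max\{2^m,2^n,|c|\}$.

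Next I split into cases. If $|N|\geq 2^{\max\{m,n\}+3}$, then $|c|\geq |N|/2$. Hence
\begin{align*}
  \cC(g)^3\geq 2^m\cdot 2^n\cdot |N|/2,
\end{align*}
which gives $3\log\cC(g)\geq (m+n)\log 2+\log|N|-\log 2$. If instead $|N|<2^{\max\{m,n\}+3}$, then $\log|N|\leq (\max\{m,n\}+3)\log 2$. In this case $\cC(g)\geq 2^{\max\{m,n\}}$ already controls $m+n+\log|N|$ linearly, since $m+n\leq 2\max\{m,n\}$. In both cases we obtain ${\|g\|}_{S_{G_0}}\geq \kappa(m+n+\log|N|)-\kappa'$ for explicit constants, and then feed this into Proposition \ref{proposition_Burillo_BS}.

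The main obstacle is bookkeeping of constants rather than ideas. The factor $\log 2$ versus $\log$, and the loss from taking a maximum instead of a product in the second case, must be absorbed so as to land exactly on $\frac{1}{6C_2}$ and $\frac{D_2}{6C_2}$. If the crude estimates fall short, I would sharpen them in one of two ways. The first is to use the other explicit component from Lemma \ref{lemma_iterates_g2}, or the breakpoint $1/(N+2)$ with denominator about $|N|$, to get a better bound on $D(g)$. The second is to enlarge the constants $C_2, D_2$ from Proposition \ref{proposition_Burillo_BS}, which are only asserted to exist, so that the displayed inequality follows.
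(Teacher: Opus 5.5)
Your computation is sound. With $fg=g\circ f$, the germ of $g$ at $0^+$ is $t\mapsto 2^nt/(2^m+ct)$ with $c=2^{n+1}-2^{m+1}-N$, the normal form makes this matrix primitive, and your case split gives $\|g\|_{S_{G_0}}\geq\kappa(m+n+\log|N|)-\kappa'$. That suffices for Theorem~\ref{theorem_BS_distorted}. It is not the displayed inequality, however. You reduced the statement to $\frac12\log\cC(g)\geq\frac16(m+n+\log|N|)$, i.e.\ $\cC(g)^3\geq e^{m+n}|N|$. Your estimates give only $2^{m+n}|N|/2$ in the first case and slope $\frac{\log 2}{2(2+\log 2)}<\frac16$ in the second, so $\kappa=\frac{\log 2}{6}<\frac16$.

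Your remedy (i) cannot close this gap. In your composition order, the breakpoint $\frac1{N+2}$ of $g_2^N$ is carried to $g_1^m(\frac1{N+2})=\frac{2^m}{N+2^{m+1}}$, which is again additive data. Additive data cannot reach the target: for $m=n$ and $N=2^m+1$, both your matrix and this breakpoint give only $\frac12\log\cC(g)\approx\frac{\log 2}{2}m$, whereas $\frac16(m+n+\log|N|)\approx\frac{2+\log 2}{6}m$. Remedy (ii) does produce the displayed shape, taking $C_2'=C_2/(6\kappa)$ and $D_2'=D_2+6C_2'\kappa'$. But these are different constants from those of Proposition~\ref{proposition_Burillo_BS}, so what you have proved is the proposition with enlarged constants.

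The paper reaches exactly $\frac16$ through multiplicative bounds. It exhibits a breakpoint with denominator $|N|2^n+2$, giving $\cC(g)\geq|N|2^n+2$. Applied to $g^{-1}=g_1^{-n}g_2^{-N}g_1^m$, it gives $\cC(g^{-1})\geq|N|2^m+2$. It then averages $\|g\|_{S_{G_0}}=\frac12(\|g\|_{S_{G_0}}+\|g^{-1}\|_{S_{G_0}})$ and uses $\frac{\log 2}{4}>\frac16$.

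You cannot simply borrow this step, though. The paper evaluates $g$ on $[0,\frac1{N2^n+2}]$ as $g_1^{-m}(g_2^N(g_1^n(t)))$, i.e.\ applying $g_1^n$ first. That is the reverse of the order dictated by $fg=g\circ f$, which is the convention under which $g_1g_2g_1^{-1}=g_2^2$ and the one you use. In the correct order the product $|N|2^n$ does not arise. For instance, $g=g_2g_1^n$ (so $m=0$, $N=1$) satisfies $B(g)\subset B(g_2)\cup g_2^{-1}(B(g_1^n))\subset\{0,\frac13,\frac12,1\}$, so $D(g)\geq N2^n+2$ fails for $n\geq 1$.

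Your germ at $0^+$ is therefore the consistent computation. Obtaining the constants $\frac1{6C_2}$ and $\frac{D_2}{6C_2}$ as stated would need a genuinely new argument. As it stands, your proposal proves the undistortion content with weaker constants.
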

\begin{proof}
  %We may assume $N \geq 1$ since the word lengths of $g_1^{-m} g_2^N g_1^n$ and $(g_1^{-m} g_2^N g_1^n)^{-1}=g_1^{-n}g_2^{-N}g_1^m$ are the same.
  Suppose first that $N>0$.
  We claim that
  \begin{align}
    D(g) \geq N2^n+2 \label{eq_D(g)_lower_positive}
  \end{align}
  holds.
  We begin by observing that
  \begin{align*}
    y=\frac{1}{N2^n+2}
  \end{align*}
  is in $B(g)$.
  Since
  \begin{align*}
    g_2^N\left(g_1^n\left(\left[0, \frac{1}{N2^n+2}\right]\right)\right)
    =g_2^N\left(\left[0, \frac{1}{N+2}\right]\right)
    =\left[0, \frac{1}{2}\right]
  \end{align*}
  hold by Lemmas \ref{lemma_iterates_g1} and \ref{lemma_iterates_g2}, we have
  \begin{align*}
    (g_1^{-m}g_2^Ng_1^n)\vert_{[0, \frac{1}{N2^n+2}]}(t)
     & =g_1^{-m}\left(\frac{\frac{2^n t}{2(2^n-1)t+1}}{1-N\left(\frac{2^nt}{2(2^n-1)t+1}\right)}\right)         \\
     & =g_1^{-m}\left(\frac{2^n t}{\left((2-N)2^n-2\right)t+1}\right)                                           \\
     & = \frac{\frac{2^n t}{\left((2-N)2^n-2\right)t+1}}{-2(2^m-1)\frac{2^n t}{\left((2-N)2^n-2\right)t+1}+2^m} \\
     & = \frac{2^n t}{-(2^{m+1}-2^{n+1}+N2^{m+n})t+2^m}.
  \end{align*}
  On the other hand, since
  \begin{align*}
    g_2^N\left(g_1^n\left(\left[\frac{1}{N2^n+2}, \frac{1}{N2^n+2-2^n}\right]\right)\right)
    =g_2^N\left(\left[\frac{1}{N+2}, \frac{1}{N+1}\right]\right)
    =\left[\frac{1}{2}, \frac{2}{3}\right]
  \end{align*}
  hold by Lemmas \ref{lemma_iterates_g1} and \ref{lemma_iterates_g2} (and their proofs), we have
  \begin{align*}
    (g_1^{-m}g_2^Ng_1^n)\vert_{[\frac{1}{N2^n+2}, \frac{1}{N2^n+2-2^n}]}(t) & =
    g_1^{-m}\left(\frac{(N+3)\frac{2^n t}{2(2^n-1)t+1}-1}{(N+4)\frac{2^n t}{2(2^n-1)t+1}-1}\right)
    \\
                                                                            & =g_1^{-m}\left(\frac{(N2^n+2^n+2)t-1}{(N2^n+2^{n+1}+2)t-1}\right)
    \\
                                                                            & =\frac{(2^{m+1}-1)\frac{(N2^n+2^n+2)t-1}{(N2^n+2^{n+1}+2)t-1}+(1-2^m)}{(2^{m+1}-2)\frac{(N2^n+2^n+2)t-1}{(N2^n+2^{n+1}+2)t-1}+(2-2^m)} \\
                                                                            & = \frac{(N2^{m+n}+2^{m+1}+2^n)t-2^m}{(N2^{m+n}+2^{m+1}+2^{n+1})t-2^m}.
  \end{align*}
  A direct computation shows that
  \begin{align*}
    \frac{2^n t}{-(2^{m+1}-2^{n+1}+N2^{m+n})t+2^m}
    =
    \frac{(N2^{m+n}+2^{m+1}+2^n)t-2^m}{(N2^{m+n}+2^{m+1}+2^{n+1})t-2^m}
  \end{align*}
  holds only when $t=1/((N2^n+2))=y$.
  Hence $y$ is in $B(g)$ and inequality \eqref{eq_D(g)_lower_positive} holds.

  Next, when $N<0$, we claim that
  \begin{align}
    D(g) \geq |N|2^n+2 \label{eq_D(g)_lower_negative}
  \end{align}
  holds.
  Let $M=-N>0$.
  To show this, we prove that
  \begin{align*}
    y=\frac{M2^n+1}{M2^n+2}=\frac{-N2^n+1}{-N2^n+2}
  \end{align*}
  is in $B(g)$ by observing
  %Since
  %\begin{align*}
  %  g_2^N\left(g_1^n\left(\left[0, \frac{-N2^n+1}{-N2^n+2}\right]\right)\right)
  %  =g_2^N\left(\frac{M+1}{M+2}\right)
  %\end{align*}
  \begin{align*}
    g\vert_{\left[\frac{2^n(M-1)+1}{2^n(M-1)+2}, \frac{2^nM+1}{2^nM+2}\right]}, g\vert_{\left[\frac{2^nM+1}{2^nM+2}, 1\right]}.
  \end{align*}
  Since the two intervals are contained in $[1/2, 1]$,
  we have
  \begin{align*}
    g_1^n\vert_{\left[\frac{2^n(M-1)+1}{2^n(M-1)+2}, \frac{2^nM+1}{2^nM+2}\right]}(t)
    &=\frac{(2-2^n)t+(2^n-1)}{(2-2^{n+1})t+(2^{n+1}-1)}, \\
    g_1^n \vert_{\left[\frac{2^nM+1}{2^nM+2}, 1\right]}(t)
    &=\frac{(2-2^n)t+(2^n-1)}{(2-2^{n+1})t+(2^{n+1}-1)}.
  \end{align*}
  Note that we have
  \begin{align*}
    g_1^n\left(\frac{2^n(M-1)+1}{2^n(M-1)+2}\right) & =\frac{M}{M+1},
                                                    &
    g_1^n\left(\frac{2^nM+1}{2^nM+2}\right)
                                                    & =
    \frac{M+1}{M+2},
                                                    &
    g_1^n(1)                                        & =1.
  \end{align*}
  Similar to the proof of Lemma \ref{lemma_iterates_g2}, we have
  \begin{align*}
    g_2^{-M}\vert_{[\frac{M}{M+1}, \frac{M+1}{M+2}]}(t)=\frac{t-1}{(M+4)t-(M+3)}
  \end{align*}
  and
  \begin{align*}
    g_2^{-M}\vert_{[\frac{M+1}{M+2}, 1]}(t)=\frac{(M+1)t-M}{Mt+(1-M)}.
  \end{align*}
  Hence we have
  \begin{align*}
    g_2^{-M}(g_1^n\vert_{\left[\frac{2^n(M-1)+1}{2^n(M-1)+2}, \frac{2^nM+1}{2^nM+2}\right]}(t))
     & =\frac{\frac{(2-2^n)t+(2^n-1)}{(2-2^{n+1})t+(2^{n+1}-1)}-1}{(M+4)\frac{(2-2^n)t+(2^n-1)}{(2-2^{n+1})t+(2^{n+1}-1)}-(M+3)} \\
     & =\frac{2^nt-2^n}{(2^nM+2^{n+1}+2)t-(2^nM+2^{n+1}+1)}
  \end{align*}
  and
  \begin{align*}
    g_2^{-M}(g_1^n\vert_{\left[\frac{2^nM+1}{2^nM+2}, 1\right]}(t))
     & =\frac{(M+1)\frac{(2-2^n)t+(2^n-1)}{(2-2^{n+1})t+(2^{n+1}-1)}-M}{M\frac{(2-2^n)t+(2^n-1)}{(2-2^{n+1})t+(2^{n+1}-1)}+(1-M)} \\
     & =\frac{(2^nM-2^n+2)t-(2^nM-2^n+1)}{(2^nM-2^{n+1}+2)t-(2^nM-2^{n+1}+1)}. 
  \end{align*}
  We can see that
  \begin{align*}
    g_2^{-M}(\frac{M}{M+1})   & =\frac{1}{3},
                              &
    g_2^{-M}(\frac{M+1}{M+2}) & =\frac{1}{2},
                              &
    g_2^{-M}(1)               & =1
  \end{align*}
  hold.

  Consequently, we obtain
  \begin{align*}
    g\vert_{\left[\frac{2^n(M-1)+1}{2^n(M-1)+2}, \frac{2^nM+1}{2^nM+2}\right]}(t)
     & =\frac{\frac{2^nt-2^n}{(2^nM+2^{n+1}+2)t-(2^nM+2^{n+1}+1)}}{2(1-2^m)\frac{2^nt-2^n}{(2^nM+2^{n+1}+2)t-(2^nM+2^{n+1}+1)}+2^m} \\
     & =\frac{2^nt-2^n}{(2^{m+1}+2^{n+1}+2^{m+n}M)t-(2^{m}+2^{n+1}+2^{m+n}M)}
  \end{align*}
  and
  \begin{align*}
    g\vert_{\left[\frac{2^nM+1}{2^nM+2}, 1\right]}(t)
     & =\frac{(2^{m+1}-1)\frac{(2^nM-2^n+2)t-(2^nM-2^n+1)}{(2^nM-2^{n+1}+2)t-(2^nM-2^{n+1}+1)}+(1-2^m)}{(2^{m+1}-2)\frac{(2^nM-2^n+2)t-(2^nM-2^n+1)}{(2^nM-2^{n+1}+2)t-(2^nM-2^{n+1}+1)}+(2-2^m)} \\
     & = \frac{(2^{m+1}-2^n+2^{m+n}M)t-(2^m-2^n+2^{m+n}M)}{(2^{m+1}-2^{n+1}+2^{m+n}M)t-(2^m-2^{n+1}+2^{m+n}M)}.
  \end{align*}
  If
  \begin{align*}
    %g\vert_{\left[\frac{2^n(M-1)+1}{2^n(M-1)+2}, \frac{2^nM+1}{2^nM+2}\right]}(t)
    %=g\vert_{\left[\frac{2^nM+1}{2^nM+2}, 1\right]}(t)
    \frac{2^nt-2^n}{(2^{m+1}+2^{n+1}+2^{m+n}M)t-(2^{m}+2^{n+1}+2^{m+n}M)} \\
    =\frac{(2^{m+1}-2^n+2^{m+n}M)t-(2^m-2^n+2^{m+n}M)}{(2^{m+1}-2^{n+1}+2^{m+n}M)t-(2^m-2^{n+1}+2^{m+n}M)}
  \end{align*}
  holds, then $t=(2^nM+1)/(2^nM+2)$.
  Therefore $y$ is in $B(g)$. 
  Since $M 2^n+2 = |N|2^n+2$, inequality \eqref{eq_D(g)_lower_negative} holds.

  Now we estimate the word length of $g$ with respect to $S_{G_0}$.
  Since ${\|g\|}_{S_{G_0}}={\|g^{-1}\|}_{S_{G_0}}$ holds, if $N>0$, we have
  \begin{align*}
    {\|g\|}_{S_{G_0}}
    =\frac{1}{2}({\|g\|}_{S_{G_0}}+{\|g^{-1}\|}_{S_{G_0}})
     & \geq \frac{1}{4}(\log \cC(g)+\log \cC(g^{-1}))            \\
     & \geq \frac{1}{4}\left(\log(N2^n+2)+\log(|-N|2^m+2)\right) \\
     & \geq \frac{1}{4}(2\log(|N|)+n\log 2+m\log 2)              \\
     & \geq \frac{1}{6}(\log(|N|)+n+m)                           \\
     & \geq \frac{1}{6C_2}{\|g\|}_{S_{BS(1, 2)}}-\frac{D_2}{6C_2}
  \end{align*}
  by Corollary \ref{corollary_wordlength_estimate_C}, inequalities \eqref{eq_D(g)_lower_positive} and \eqref{eq_D(g)_lower_negative}, and Proposition \ref{proposition_Burillo_BS}.
  If $N<0$, since $\cC(g) \geq |N|2^n+2$ and $\cC(g^{-1}) \geq -N2^m+2$ hold, the same inequality follows.
\end{proof}
We are now ready to prove the main theorem.
\begin{theorem}\label{theorem_BS_distorted}
  The Baumslag--Solitar group $BS(1, 2)$ is an undistorted subgroup of the Lodha--Moore group $G_0$.
\end{theorem}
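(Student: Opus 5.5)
The plan is to assemble the two lower bounds already proved, Lemma \ref{lemma_undistorted_N=0} and Proposition \ref{proposition_BS_undistorted_Nneq0}, into a single quasi-isometric inequality. The reverse inequality is automatic, since the inclusion of a finitely generated subgroup is always Lipschitz: with $L \coloneqq \max\{\|g_1\|_{S_{G_0}}, \|g_2\|_{S_{G_0}}\}$ (finite, e.g.\ $L \le 6$ from the defining words of $g_1,g_2$), we get $\|g\|_{S_{G_0}} \le L\|g\|_{S_{BS(1,2)}}$ for every $g \in \langle g_1,g_2\rangle$. So it suffices to find $\lambda \ge 1$ and $k \ge 0$ with $\frac{1}{\lambda}\|g\|_{S_{BS(1,2)}} - k \le \|g\|_{S_{G_0}}$ for all $g$, as noted at the start of the subsection.

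Via the isomorphism $\phi$, write $g$ in normal form $g_1^{-m}g_2^Ng_1^n$ with $m,n \ge 0$. I will split into two cases. If $N \neq 0$, Proposition \ref{proposition_BS_undistorted_Nneq0} gives the bound directly with $\lambda = 6C_2$ and $k = D_2/(6C_2)$. If $N=0$, then $g = g_1^{n-m}$ is a power of $g_1$, and Lemma \ref{lemma_undistorted_N=0} gives $\|g\|_{S_{G_0}} \ge \frac14 \|g\|_{S_{BS(1,2)}}$. Taking $\lambda = \max\{4, 6C_2\}$ and $k = D_2/(6C_2)$ covers both cases, because a weaker constant only weakens each inequality: $\frac{1}{\lambda}\|g\| \le \frac{1}{6C_2}\|g\|$ and $\frac{1}{\lambda}\|g\| \le \frac14\|g\|$.

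One detail needs checking. When $N=0$ the normal form convention requires $m$ or $n$ to be zero, so $g=g_1^{n-m}$ is exactly of the form treated in Lemma \ref{lemma_undistorted_N=0}. The case $N \neq 0$ with $m$ or $n$ zero is still covered by Proposition \ref{proposition_BS_undistorted_Nneq0}, since that proposition only assumes $N \neq 0$.

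I expect no real obstacle, since all the analytic work is in the preceding results. The only point requiring care is that the word length appearing in Proposition \ref{proposition_Burillo_BS}, taken with respect to $\{x,t\}$, coincides under $\phi$ with $\|\cdot\|_{S_{BS(1,2)}}$, which holds by construction because $\phi(t)=g_1$ and $\phi(x)=g_2$. Combining the two inequalities then shows that the inclusion $\langle g_1,g_2\rangle \hookrightarrow G_0$ is a quasi-isometric embedding, i.e.\ $BS(1,2)$ is undistorted in $G_0$.
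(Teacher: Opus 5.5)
Your proposal is correct and matches the paper's proof. You split by whether $N=0$ in the normal form, apply Lemma \ref{lemma_undistorted_N=0} and Proposition \ref{proposition_BS_undistorted_Nneq0}, and take $\lambda=\max\{4,6C_2\}$ and $k=D_2/(6C_2)$, which are exactly the constants in the paper. Your explicit Lipschitz upper bound only spells out what the paper leaves implicit in its ``it suffices to prove'' reduction.
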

\begin{proof}
  We set
  \begin{align*}
    \frac{1}{\lambda}=\min\{\frac{1}{4}, \frac{1}{6C_2}\}, k=\frac{D_2}{6C_2}.
  \end{align*}
  Then for any $g \in \langle g_1, g_2 \rangle$, we have
  \begin{align*}
    {\|g\|}_{S_{G_0}}
    \geq \frac{1}{\lambda}{\|g\|}_{S_{BS(1, 2)}}-k
  \end{align*}
  by Lemma \ref{lemma_undistorted_N=0} and Proposition \ref{proposition_BS_undistorted_Nneq0}.
\end{proof}
\begin{remark}
  In \cite{kodama2023n}, the author defined a generalization of the group $G_0$, called $G_0(n)$.
  Since $G_0$ is a subgroup of $G_0(n)$, the group $BS(1, 2)$ is also a subgroup of $G_0(n)$. 
  However, to the best of our knowledge, it is not known whether $G_0(n)$ is a subgroup of $\PPSL$ or not.
  Hence, the proof of Theorem \ref{theorem_BS_distorted} cannot be directly extended to $G_0(n)$.
\end{remark}
%%%%%%%%%%%%%%%%%%%%%%%%%%%%%%%%%%%%%%%%%
\section{$F$ is distorted in $G_0$} \label{section_F_distorted}
In contrast to the previous section, here we regard elements of $F<G_0$ as pairs of binary trees. 
Recall that by using binary trees, their word lengths can be estimated by the number of carets (Theorem \ref{theorem_Burillo_metric_Fp}).
By analyzing a particular family of elements, we obtain the following: 
\begin{theorem} \label{theorem_F_distorted}
  The group $F$ is at least exponentially distorted in $G_0$.
\end{theorem}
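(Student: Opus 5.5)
The plan is to exhibit an explicit sequence $f_n \in F$ whose $G_0$-word length grows linearly in $n$ while its $F$-word length grows like $2^n$. The natural source of such a sequence is the copy of $BS(1,2)$ from Section \ref{section_BS_undistorted}. The key observation is that
\begin{align*}
  g_2 = bba^{-1}b^{-1}ab^{-1} \in \langle a, b\rangle = F,
\end{align*}
whereas $g_1$ involves $c$ and hence lies outside $F$. In $\langle g_1, g_2\rangle \cong BS(1,2)$ we have the relation $\phi(t)\phi(x)\phi(t)^{-1} = \phi(x)^2$. Iterating it gives
\begin{align*}
  g_1^{\,n} g_2 g_1^{-n} = g_2^{2^n},
\end{align*}
possibly with $g_1$ replaced by $g_1^{-1}$, depending on the composition convention $fg = g\circ f$; this only needs checking on one step, for instance with Lemma \ref{lemma_iterates_g1} and the formula for $g_2$ near $0$. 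Since $g_1$ and $g_2$ have lengths at most $6$ in $S_{G_0}$, we get ${\|g_2^{2^n}\|}_{S_{G_0}} \leq 12n + 6$.

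It remains to show that ${\|g_2^{K}\|}_{S_F} \geq c K$ for some constant $c>0$ and all $K$. By Theorem \ref{theorem_Burillo_metric_Fp}, it suffices to show that $N(g_2^K) \geq c' K$.

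\textbf{Cantor-set argument.} First I would describe $g_2$ via Proposition \ref{proposition_G_0_piecewise_Cantor} as an element of $F$ acting on the cone $\{10\eta\}$. From Figure \ref{figure_generators_BS}, it acts there essentially as a copy of $x_0$, conjugated into that cone, so it should map $100^{j+1}\eta \mapsto 100^{j}\eta$ near the left end of the cone. Then $g_2^K$ sends $100^{K+1}\eta$ to $10\,0\eta$ (or a similar shift by $K$ levels). In a reduced tree pair $(T_1,T_2)$ for $g_2^K$, the leaf of $T_1$ containing the point $1000\cdots$ must therefore sit at depth at least $K$, up to an additive constant. Otherwise $g_2^K$ would act as a prefix replacement on a cone of bounded depth, contradicting the shift by $K$ levels. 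A tree having a leaf at depth $\geq K$ has at least $K$ carets, which gives $N(g_2^K) \geq K - O(1)$.

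\textbf{Real-line alternative.} One could instead use the real-line picture. By Lemma \ref{lemma_iterates_g2}, $1/(K+2)$ is a breakpoint of $g_2^K$, and under the identification of $\langle a,b\rangle$ with $F$ (the Minkowski-type correspondence), breakpoints of large denominator correspond to deep dyadic breakpoints. The Cantor-set argument is cleaner, though. As a third option, one could simply cite the known fact that cyclic subgroups of $F$ (indeed of $V$) are undistorted \cite{zbMATH02147011,zbMATH06256444}, which gives ${\|g_2^K\|}_{S_F} \geq cK$ directly.

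\textbf{Conclusion.} Combining the two bounds gives elements $f_n = g_2^{2^n} \in F$ with
\begin{align*}
  {\|f_n\|}_{S_F} \geq c\,2^n
  \quad\text{and}\quad
  {\|f_n\|}_{S_{G_0}} \leq 12n + 6.
\end{align*}
Hence the distortion function of $F$ in $G_0$ is at least exponential.

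\textbf{Main obstacle.} I expect the hardest part to be the linear lower bound on $N(g_2^K)$ done rigorously from the tree-diagram description, namely pinning down exactly how $g_2$ acts on the Cantor set. If that becomes messy, I would fall back on citing undistortedness of cyclic subgroups of $F$. The sign convention in the Baumslag–Solitar relation is a minor point that only needs a single verification.
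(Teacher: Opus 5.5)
Your proof is correct, and it takes a genuinely different route from the paper's. The paper does not use the $BS(1,2)$ subgroup at this point. It fixes $w=x_0x_1^2x_0^{-1}x_1^{-1}x_0x_1^{-1}x_0^{-1}\in F$ and sets $a_n=y_{010}^n\,w\,y_{0111}^{-n}y_{010}^{-n}y_{011}^n$. It then asserts, by a tree-diagram induction shown only in a figure, that $a_n\in F$ with $N(a_n)=2^n+3$. Finally it bounds $\|a_n\|_{S_{G_0}}\le 30+4n$ by writing each $y_s$ as an $F$-conjugate of $y_{10}$.

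You instead use the fact that $g_2\in\langle a,b\rangle$ is conjugate in $G_0$ to its own square. Hence $g_2^{2^n}=g_1^n g_2 g_1^{-n}$ has length at most $12n+6$, and everything reduces to undistortedness of $\langle g_2\rangle$ in $F$.

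Two small points:
\begin{enumerate}
\item The relation is exactly $g_1g_2g_1^{-1}=g_2^2$ in the paper's convention. It is encoded in $\phi$, and it can be checked directly on the pieces $[0,\tfrac14]$, $[\tfrac14,\tfrac12]$, $[\tfrac12,1]$. So your hedge about replacing $g_1$ by $g_1^{-1}$ is unnecessary.
\item ``$g_1$ involves $c$'' is not by itself a reason for $g_1\notin F$. You never need that fact, though.
\end{enumerate}

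Your Cantor-set argument is sound, and it works for every tree pair, not only the reduced one. Since $g_2^K(10^m\eta)=10^{m-K}\eta$ for $m\ge K+2$ and $g_2^K$ fixes $10^\infty$, the leaf $10^j$ of $T_1$ containing $10^\infty$ must be sent to $10^{j-K}$. This forces $j\ge K$, so $N(g_2^K)\ge K+1$. Citing undistortedness of cyclic subgroups of $F$ works equally well.

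Your route is shorter and avoids the tree-pair induction. It also exposes the mechanism: $\langle g_2\rangle$ is exponentially distorted in $BS(1,2)$, hence in $G_0$, but undistorted in $F$. This ties the result directly to Section~\ref{section_BS_undistorted}. The paper's route gives explicit elements with exact caret counts and a better constant in the exponent. It is also the version the author says adapts to $F(n)<G_0(n)$.
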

\begin{proof}
  Let $a_n=y_{010}^n (x_0x_1^2x_0^{-1}x_1^{-1}x_0x_1^{-1}x_0^{-1})y_{0111}^{-n}y_{010}^{-n}y_{011}^n$.
  Figure \ref{figure_distorted_tree_F} shows the reduced pair of $x_0x_1^2x_0^{-1}x_1^{-1}x_0x_1^{-1}x_0^{-1}$.
  \begin{figure}[tbp]
    \begin{center}
      \includegraphics[width=0.35\linewidth]{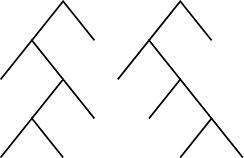}
    \end{center}
    \caption{The reduced pair of $x_0x_1^2x_0^{-1}x_1^{-1}x_0x_1^{-1}x_0^{-1}$. }
    \label{figure_distorted_tree_F}
  \end{figure}
  By induction, one checks that $a_n$ is in $F$ and $N(a_n)=2^{n}+3$.
  Figure \ref{figure_an} depicts the reduced pair of $a_n$.
  \begin{figure}[tbp]
    \begin{center}
      \includegraphics[width=0.8\linewidth]{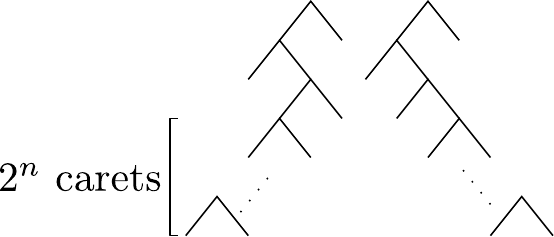}
    \end{center}
    \caption{The reduced pair of $a_n$. }
    \label{figure_an}
  \end{figure}
  Using the relations
  \begin{align*}
    y_{010}=(x_0x_1)y_{10}(x_0x_1)^{-1}, y_{0111}=(x_0x_1x_0^{-1}x_1x_0^{-1})y_{10}(x_0x_1x_0^{-1}x_1x_0^{-1})^{-1}
  \end{align*}
  and
  \begin{align*}
    y_{011}=(x_0x_1x_0^{-1})y_{10}(x_0x_1x_0^{-1})^{-1},
  \end{align*}
  we obtain a bound
  \begin{align*}
    {\|a_n\|}_{S_{G_0}} \leq 30+4n.
  \end{align*}

  Together with Theorem \ref{theorem_Burillo_metric_Fp}, this shows that $F$ is distorted in $G_0$.
\end{proof}
\begin{remark}
  A similar result also holds for $G_0(n)$.
  %Namely, the Brown--Thompson group $F(n)$ is distorted in $G_0(n)$.
  In fact, the same argument shows that the Brown--Thompson group $F(n)$ is distorted in $G_0(n)$.
\end{remark}
%%%%%%%%%%%%%%%%%%%%%%%%%%%%%%%%%%%%%%%%%
\section*{Acknowledgements}
The author was supported by JSPS KAKENHI Grant number 24K22836.
%%%%%%%%%%%%%%%%%%%%%%%%%%%%%%%%%%%%%%%%%
%%%%%%%%%%%%%%%%%%%%%%%%%%%%%%%%%%%%%%%%%
\bibliographystyle{plain}
\bibliography{references}
%%%%%%%%%%%%%%%%%%%%%%%%%%%%%%%%%%%%%%%%%
%%%%%%%%%%%%%%%%%%%%%%%%%%%%%%%%%%%%%%%%%
%%%%%%%%%%%%%%%%%%%%%%%%%%%%%%%%%%%%%%%%%

\bigskip
Yuya Kodama

\address{
  Graduate School of Science and Engineering, Kagoshima University,
  1-21-35 Korimoto, Kagoshima-city, Kagoshima, 890-0065, Japan
}

\textit{E-mail address}: \href{mailto:yuya@sci.kagoshima-u.ac.jp}{\texttt{yuya@sci.kagoshima-u.ac.jp}}
%%%%%%%%%%%%%%%%%%%%%%%%%%%%%%%%%%%%
\end{document}